\newtheorem{theorem}{Theorem}[section]
\newtheorem{lemma}{Lemma}[section]
\theoremstyle{remark}
\newtheorem{remark}{Remark}[section]
\theoremstyle{definition}
\theoremstyle{remark}
\newtheoremstyle{myremark}{}{}{\color{blue}\small}{}{\color{blue}\bfseries}{}{ }{}
\theoremstyle{myremark}
\newcommand{\E}{\mathbb{E}} %for expectation
\newcommand{\Var}{\operatorname{Var}} %for variance
\newcommand{\R}{{\mathbb R}}
\newcommand\1{\mathbf{1}}
\DeclareMathOperator{\sign}{sign}
\newcommand{\mff}{\mathfrak{f}}
\begin{document}

\renewcommand*{\thefootnote}{\fnsymbol{footnote}}

\begin{center}
\Large{\textbf{The multifaceted behavior of integrated supOU processes:\\ The infinite variance case}}\\
\large{\today}\\
\bigskip
Danijel Grahovac$^1$\footnote{dgrahova@mathos.hr}, Nikolai N.~Leonenko$^2$\footnote{LeonenkoN@cardiff.ac.uk}, Murad S.~Taqqu$^3$\footnote{murad@bu.edu}\\
%\today
\end{center}

\bigskip
\begin{flushleft}
\footnotesize{
$^1$ Department of Mathematics, University of Osijek, Trg Ljudevita Gaja 6, 31000 Osijek, Croatia\\
$^2$ School of Mathematics, Cardiff University, Senghennydd Road, Cardiff, Wales, UK, CF24 4AG}\\
$^3$ Department of Mathematics and Statistics, Boston University, Boston, MA 02215, USA
\end{flushleft}

\bigskip

\textbf{Abstract: } The so-called ``supOU'' processes, namely the superpositions of Ornstein-Uhlenbeck type processes are stationary processes for which one can specify separately the marginal distribution and the dependence structure. They can have finite or infinite variance. We study the limit behavior of integrated infinite variance supOU processes adequately normalized. Depending on the specific circumstances, the limit can be fractional Brownian motion but it can also be a process with infinite variance, a L\'evy stable process with independent increments or a stable process with dependent increments. We show that it is even possible to have infinite variance integrated supOU processes converging to processes whose moments are all finite. A number of examples are provided. 

\bigskip

\textbf{Keywords: } supOU processes, Ornstein-Uhlenbeck process, limit theorems, infinite variance, stable processes

\bigskip

\textbf{MSC2010: } 60F05, 60G52, 60G10

\bigskip

\section{Introduction}\label{sec1}
SupOU processes which are defined below are superpositions of stationary Ornstein-Uhlenbeck processes driven by a L\'evy process. They were studied extensively by Barndorff-Nielsen and his collaborators \cite{bn2001}, \cite{barndorff2011multivariate}, \cite{barndorff2013multivariate}, \cite{barndorff2013stochastic}. An attractive feature of supOU processes is that they allow the marginal distribution and the temporal dependence structure to be modeled independently. 

The \textit{supOU} process is  defined as follows: it is a strictly stationary process $X=\{X(t), \, t\in \R\}$ represented by the stochastic integral (\cite{bn2001})
\begin{equation}\label{supOU}
X(t)= \int_{\R_+} \int_{\R} e^{-\xi t + s} \mathbf{1}_{[0,\infty)}(\xi t -s) \Lambda(d\xi,ds).
\end{equation}
Here, $\Lambda$ is a homogeneous infinitely divisible random measure (\textit{L\'evy basis}) on $\R_+ \times \R$,  with cumulant function for $A \in \mathcal{B} \left(\R_+ \times \R\right)$
\begin{equation}\label{cumofLam}
C\left\{ \zeta \ddagger \Lambda(A)\right\} := \log \E e^{i \zeta \Lambda(A) } =  m(A) \kappa_{L}(\zeta) = \left( \pi \times Leb \right) (A) \kappa_{L}(\zeta).
\end{equation}
The \textit{control measure} $m=\pi \times Leb$ is the product of a probability measure $\pi$ on $\R_+$ and the Lebesgue measure on $\R$. The probability measure $\pi$ ``randomizes'' the rate parameter $\xi$ and the Lebesgue measure is associated with the moving average variable $s$. Finally, $\kappa_{L}$ in \eqref{cumofLam} is the cumulant function $\kappa_{L} (\zeta)= \log \E e^{i \zeta L(1) }$ of some infinitely divisible random variable $L(1)$ with L\'evy-Khintchine triplet $(a,b,\mu)$ i.e.
\begin{equation}\label{kappacumfun}
\kappa_{L}(\zeta) = i\zeta a -\frac{\zeta ^{2}}{2} b  +\int_{\R}\left( e^{i\zeta x}-1-i\zeta x \mathbf{1}_{[-1,1]}(x)\right) \mu(dx).
\end{equation}
The L\'evy process $L=\{L(t), \, t\geq 0\}$ associated with the triplet $(a,b,\mu)$ is called the \textit{background driving L\'{e}vy process} and the quadruple
\begin{equation}\label{quadruple}
(a,b,\mu,\pi)
\end{equation}
is referred to as the \textit{characteristic quadruple}. 

The marginal distribution of $X$ is determined by $L$, while the dependence structure is controlled by the probability measure $\pi$. Indeed, if $\E X(t)^2<\infty$, then the correlation function of $X$ is the Laplace transform of $\pi$:
\begin{equation}\label{corrsupOU}
r(t)=\int_{\R_+} e^{-t \xi }\pi (d\xi ), \quad t \geq 0.
\end{equation} 
More details about supOU processes can be found in \cite{bn2001}, \cite{barndorff2005spectral}, \cite{barndorff2013levy}, \cite{barndorff2011multivariate} and \cite{GLST2017Arxiv}.

Integrated supOU process $X^*=\{X^*(t), \, t \geq 0\}$ defined by
\begin{equation}\label{integratedsupOU}
X^*(t) = \int_0^t X(s) ds,
\end{equation}
has a complex asymptotic behavior. We have shown in \cite{GLT2017Limit} that when the supOU process has a finite variance, then different types of limits of integrated process can occur depending on the specific structure of the process. In this paper, we study what happens when the supOU has infinite variance. We show that again different limits can occur depending in particular on how heavy the tails of the supOU process are. We show that it is possible to have an infinite variance process to converge to a process with all moments finite.

Our results may be of particular interest in financial econometrics where supOU processes are used as stochastic volatility models and hence the integrated process $X^*$ represents the integrated volatility (see e.g.~\cite{barndorff2013multivariate}). The limiting behavior is also important for statistical estimation (see \cite{stelzer2015moment}, \cite{curato2019weak}). In \cite{GLT2017Limit} it has been shown that integrated supOU processes may exhibit an interesting phenomenon of intermittency which may be relevant for applications in turbulence (see e.g.~\cite{zel1987intermittency}).

When the supOU process $\{X(t), \, t\in \R\}$ has finite variance, four different limiting processes may be obtained depending on the elements of the characteristic quadruple, namely
\begin{itemize}
\item Brownian motion, 
\item fractional Brownian motion, 
\item a stable L\'evy process,
\item a stable process with dependent increments defined in \eqref{Zalphabeta} below.
\end{itemize} 
The type of limit depends on whether Gaussian component is present in \eqref{quadruple}, on a parameter $\alpha$ quantifying dependence and on a parameter $\beta$ quantifying the growth of the L\'evy measure $\mu$ in \eqref{quadruple} near origin.

We show in this paper that when the supOU process $\{X(t), \, t\in \R\}$ has infinite variance, the limiting behavior depends additionally on the regular variation index $\gamma$ of the marginal distribution. As limiting process, one can obtain
\begin{itemize}
\item a stable L\'evy process, 
\item a stable process with dependent increments defined in \eqref{Zalphabeta} below,
\item fractional Brownian motion. 
\end{itemize}
We provide examples to illustrate the results.

The paper is organized as follows. In Section \ref{sec2} we list the assumptions used for our results. Section \ref{sec3} contains the main results and in Section \ref{sec4} examples are provided. All the proofs are contained in Section \ref{sec:proofs}.

\section{Basic assumptions}\label{sec2}

Before stating the main results we introduce some notation and basic assumptions. 

\subsection{Preliminaries}

A random variable $Z$ with an infinite variance stable distribution $\mathcal{S}_\gamma (\sigma, \rho, c)$ and parameters $0<\gamma<2$, $\sigma>0$, $-1\leq \rho \leq 1$ and $c\in \R$ has a cumulant function of the form
\begin{equation}\label{cum:stable}
\kappa_{\mathcal{S}_\gamma (\sigma, \rho, c)}(\zeta) := C \{ \zeta \ddagger Z \} = i c \zeta - \sigma^{\gamma} |\zeta|^\gamma  \left( 1- i \rho \sign (\zeta) \chi (\zeta, \gamma) \right), \quad \zeta \in \R,
\end{equation}
where
\begin{equation*}
\chi (\zeta, \gamma) = \begin{cases}
\tan \left(\frac{\pi \gamma}{2}\right), & \gamma\neq 1,\\
\frac{\pi}{2} \log |\zeta|, & \gamma = 1.
\end{cases}
\end{equation*}
For simplicity of the exposition, wherever it applies we will assume $Z$ is symmetric ($\rho=0$) when $\gamma=1$, hence we can write
\begin{equation*}
\chi (\zeta, \gamma) = \chi(\gamma) = \begin{cases}
\tan \left(\frac{\pi \gamma}{2}\right), & \gamma\neq 1,\\
0, & \gamma = 1.
\end{cases}
\end{equation*}
We shall make a number of basic assumptions.

\subsection{Domain of attraction}
We suppose that the marginal distribution of the supOU process $\{X(t), \, t\in \R\}$ in \eqref{supOU} belongs to the domain of attraction of stable law, that is, $X(1)$ has balanced regularly varying tails:
\begin{equation}\label{regvarofX}
P(X(1)>x) \sim p k(x) x^{-\gamma} \quad \text{and} \quad P(X(1)\leq - x) \sim q k(x) x^{-\gamma}, \quad  \text{ as } x\to \infty,
\end{equation}
for some $p,q \geq 0$, $p+q>0$, $0<\gamma<2$ and some slowly varying function $k$. If $\gamma=1$, we assume $p=q$. In particular, the variance is infinite. Moreover, when the mean is finite, that is when $\gamma>1$, we assume $\E X(1)=0$. These assumptions imply that $X(1)$ is in the domain of attraction of $\mathcal{S}_\gamma (\sigma, \rho, 0)$ law with \cite[Theorem 2.6.1]{ibragimov1971independent}
\begin{equation}\label{sigmaandrho}
\sigma = \left( \frac{\Gamma(2-\gamma)}{1-\gamma} (p+q)  \cos \left(\frac{\pi \gamma}{2}\right) \right)^{1/\gamma}, \qquad \rho = \frac{p-q}{p+q}.
\end{equation}
Now consider the L\'evy process $\{L(t),\, t \geq 0\}$ introduced in Section \ref{sec1}. By \cite[Propositon 3.1]{fasen2007extremes}, the tail of the distribution function of $X(1)$ is asymptotically equivalent to the tail of the background driving L\'evy process $L(t)$ at $t=1$. More precisely, as $ x\to \infty$
\begin{equation}\label{equivalence of tails}
P(L(1)>x) \sim \gamma P(X(1)>x) \quad \text{and} \quad P(L(1)\leq - x) \sim \gamma P(X(1)\leq -x).
\end{equation}
Hence, \eqref{regvarofX} implies
\begin{equation}\label{regvarofL}
P(L(1)>x) \sim p \gamma k(x) x^{-\gamma} \quad \text{and} \quad P(L(1)\leq - x) \sim q \gamma k(x) x^{-\gamma}, \quad  \text{as } x\to \infty,
\end{equation}
and $L(1)$ is in the domain of attraction of stable distribution  $\mathcal{S}_\gamma (\gamma^{1/\gamma} \sigma, \rho, 0)$. Note that the scale parameter $\sigma$ of $X(1)$ yields a scale parameter $\gamma^{1/\gamma} \sigma$ for $L(1)$.

The normalizing sequence in some of the limit theorems below involves the de Bruijn conjugate of a slowly varying function \cite[Subsection 1.5.7]{bingham1989regular}. Recall that the de Bruijn conjugate of some slowly varying function $h$ is a slowly varying function $h^{\#}$ such that 
\begin{equation*}
h(x) h^{\#} \left(x h(x) \right) \to 1, \qquad h^{\#}(x) h (x h^{\#}(x)) \to 1,
\end{equation*}
as $x\to \infty$. By \cite[Theorem 1.5.13]{bingham1989regular} such function always exists and is unique up to asymptotic equivalence.

\subsection{Dependence structure}
The second set of assumptions deals with the temporal dependence structure dictated by the behavior near the origin of the probability measure $\pi$ in the characteristic quadruple \eqref{quadruple}. We will assume that the probability measure $\pi$ is regularly varying at zero, that is for some $\alpha>0$ and some slowly varying function $\ell$
\begin{equation}\label{regvarofpi}
\pi \left( (0,x] \right) \sim \ell(x^{-1}) x^{\alpha}, \quad \text{ as } x \to 0.
\end{equation}
To simplify the proofs of some of the results below, we will assume that $\pi$ has a density $p$ which is monotone on $(0,x')$ for some $x'>0$, so that \eqref{regvarofpi} implies
\begin{equation}\label{regvarofp}
p (x) \sim \alpha \ell(x^{-1}) x^{\alpha-1}, \quad \text{ as } x \to 0.
\end{equation}
To see how this affects dependence, note that if the variance is finite $\E X(t)^2<\infty$, then \eqref{corrsupOU} and \eqref{regvarofpi} imply that the correlation function satisfies \cite[Proposition 2.6]{fasen2007extremes}
\begin{equation*}
r( \tau) \sim \Gamma(1+\alpha) \ell(\tau) \tau^{-\alpha}, \qquad \text{ as } \tau \to \infty.
\end{equation*}
Hence, if $\alpha \in (0,1)$, the correlation function is not integrable, and the finite variance supOU process may be said to exhibit long-range dependence. On the other hand, note that the behavior of $\pi$ at infinity does not affect the decay of correlations as decay of correlations depends on the asymptotics of $\pi$ near zero. To simplify the presentation of the results, we shall assume that
\begin{equation}\label{pifinitemean}
\int_0^\infty \xi \pi(d\xi)<\infty.
\end{equation}

\subsection{Behavior of the L\'evy measure at the origin}
Unlike classical limit theorems, the limiting distribution of the integrated supOU processes does not depend only on the tails of the marginal distribution and on the dependence structure. The third component affecting the limit is the growth of the L\'evy measure $\mu$ near origin. We will quantify this growth by assuming a power law behavior of the L\'evy measure near the origin. Let
\begin{align*}
M^+(x) &= \mu \left( [x, \infty) \right), \quad x>0,\\
M^-(x) &= \mu \left( (-\infty, -x] \right), \quad x>0,
\end{align*}
denote the tails of $\mu$. We will assume that there exists $\beta\geq 0$, $c^+, c^- \geq 0$, $c^++c^->0$ such that
\begin{equation}\label{LevyMCond}
M^+(x) \sim c^+ x^{-\beta} \ \text{ and } \ M^-(x) \sim c^- x^{-\beta} \ \text{  as } x \to 0.
\end{equation}
Since $\mu$ is the L\'evy measure, we must have $\beta<2$. If \eqref{LevyMCond} holds, then $\beta$ is the Blumenthal-Getoor index of the L\'evy measure $\mu$ defined by (see \cite{GLT2017Limit})
\begin{equation}\label{BGindex}
\beta_{BG} = \inf \left\{\gamma \geq 0 : \int_{|x|\leq 1} |x|^\gamma \mu(dx) < \infty \right\}.
\end{equation}
Note that by \cite[Lemma 7.15]{kyprianou2014fluctuations} $M^+(x) \sim P(L(1)>x)$ and $M^-(x) \sim P(L(1)\leq -x)$ as $x \to \infty$, hence we can express \eqref{regvarofL} equivalently as
\begin{equation*}
M^+(x) \sim p \gamma k(x) x^{-\gamma} \quad \text{ and } \quad M^-(x) \sim q \gamma k(x) x^{-\gamma}, \quad  \text{ as } x\to \infty.
\end{equation*}
In general, making assumptions on the value of the Blumenthal-Getoor index $\beta_{BG}$ is more general than assuming \eqref{LevyMCond}. For example, in the geometric stable example in Subsection \ref{ex:geom:stable} below, the mass of the L\'evy measure near the origin increases at the logarithmic rate, hence \eqref{LevyMCond} does not hold but $\beta_{BG}=0$. Certain parts of our main results below require only assumptions on the  value of the Blumenthal-Getoor index and not \eqref{LevyMCond} (see Remark \ref{rem:con}).

The condition \eqref{LevyMCond} may be equivalently stated in terms of the L\'evy measure of $X(1)$. Indeed, if $\nu$ is the L\'evy measure of $X(1)$, then \eqref{LevyMCond} is equivalent to
\begin{equation}\label{LevyMCondforX}
\nu \left( [x, \infty) \right) \sim \beta^{-1} c^+ x^{-\beta} \ \text{ and } \ \nu \left( (-\infty, -x] \right) \sim \beta^{-1}  c^- x^{-\beta} \ \text{  as } x \to 0.
\end{equation}
See \cite{GLT2017Limit} for details.

\section{Main results}\label{sec3}

Before stating the main theorems, let us review the parameters introduced in the previous section:
\begin{itemize}
\item $\gamma \in (0,2)$ defined in \eqref{regvarofX} is the regular variation index of the marginal distribution,
\item $\alpha \in (0,\infty)$ defined in \eqref{regvarofp} quantifies the strength of dependence,
\item $\beta \in [0,2)$ defined in \eqref{LevyMCond} is the power law exponent of the L\'evy measure $\mu$ near origin.
\end{itemize}
The resulting limiting process depends on the interplay between the parameters $\alpha$, $\beta$ and $\gamma$. In the next theorem, the process $\{X(t), \, t\in \R\}$ has no Gaussian component. Here and in what follows, $\{\cdot\} \overset{fdd}{\to} \{\cdot\}$ denotes the convergence of finite dimensional distributions.

\begin{theorem}\label{thm:mainb=0}
Suppose that the supOU process $\{X(t), \, t\in \R\}$ is such that
\begin{itemize}
\item $b=0$, thus has no Gaussian component,
\item the marginal distribution satisfies \eqref{regvarofX} with $0<\gamma<2$,
\item the behavior at the origin of the L\'evy measure $\mu$ is given by \eqref{LevyMCond} with $0\leq \beta<2$,
\item $\pi$ has a density $p$ satisfying \eqref{regvarofp} with $\alpha>0$ and some slowly varying function $\ell$ and \eqref{pifinitemean} holds. 
\end{itemize}
Then the following holds:
\begin{enumerate}[(I)]
\item If $\gamma<1+\alpha$, then as $T\to \infty$
\begin{equation*}
\left\{ \frac{1}{T^{1/\gamma} k^{\#}(T)^{1/\gamma}} X^*(Tt) \right\} \overset{fdd}{\to} \left\{L_{\gamma} (t) \right\},
\end{equation*}
where $k$ is the slowly varying function in \eqref{regvarofX}, $k^{\#}$ is the de Bruijn conjugate of $1/k\left(x^{1/\gamma}\right)$ and the limit $\{L_{\gamma}\}$ is a $\gamma$-stable L\'evy process such that $L_{\gamma}(1)\overset{d}{=} \mathcal{S}_\gamma (\widetilde{\sigma}_{1,\gamma}, \rho, 0)$ with 
\begin{equation*}
\widetilde{\sigma}_{1,\gamma} = \sigma \left( \gamma \int_0^\infty \xi^{1-\gamma} \pi(d\xi) \right)^{1/\gamma},
\end{equation*}
and $\sigma$ and $\rho$ given by \eqref{sigmaandrho}.

\item If $\gamma>1+\alpha$, then the limit depends on the value of $\beta$, as follows.

\begin{enumerate}[({II}.a)]
\item If $\beta<1+\alpha$, then as $T\to \infty$
\begin{equation*}
\left\{ \frac{1}{T^{1/(1+\alpha)} \ell^{\#}\left(T \right)^{1/(1+\alpha)}} X^*(Tt) \right\} \overset{fdd}{\to} \left\{L_{1 + \alpha} (t) \right\},
\end{equation*}
where the limit $\{L_{1+\alpha}\}$ is a $(1+\alpha)$-stable L\'evy process such that $L_{1+\alpha}(1)\overset{d}{=} \mathcal{S}_{1+\alpha} (\widetilde{\sigma}, \widetilde{\rho}, 0)$ with 
\begin{equation*}
\widetilde{\sigma} = \left(\widetilde{\sigma}_{1,\beta}^{1+\alpha} + \widetilde{\sigma}_{2,\alpha}^{1+\alpha} \right)^{1/(1+\alpha)}, \qquad \widetilde{\rho} = \frac{\widetilde{\rho}_{1,\beta} \widetilde{\sigma}_{1,\beta}^{1+\alpha} + \widetilde{\rho}_{2,\alpha} \widetilde{\sigma}_{2,\alpha}^{1+\alpha}}{\widetilde{\sigma}_{1,\beta}^{1+\alpha} + \widetilde{\sigma}_{2,\alpha}^{1+\alpha}},
\end{equation*}
with $\widetilde{\sigma}_{1,\beta}$ and $\widetilde{\rho}_{1,\beta}$ defined in Lemma \ref{lemma:X1case2} and $\widetilde{\sigma}_{2,\alpha}$ and $\widetilde{\rho}_{2,\alpha}$ defined in Lemma \ref{lemma:X2LRD} below. 
\item If $1+\alpha<\beta$, then as $T\to \infty$
\begin{equation*}
\left\{ \frac{1}{T^{1-\alpha/\beta} \ell(T)^{1/\beta}} X^*(Tt) \right\} \overset{fdd}{\to} \left\{Z_{\alpha, \beta} (t) \right\},
\end{equation*}
where $\{Z_{\alpha, \beta}\}$ is a process with the stochastic integral representation
\begin{equation}\label{Zalphabeta}
Z_{\alpha, \beta}(t) = \int_{\R_+} \int_{\R} \left( \mathfrak{f}(\xi, t-s) - \mathfrak{f}(\xi, -s) \right) K(d\xi, ds),
\end{equation}
$\mathfrak{f}$ is given by
\begin{equation}\label{mff}
\mff(x,u) = \begin{cases}
1-e^{-xu}, & \text{ if } x>0 \text{ and } u>0,\\
0, & \text{ otherwise},
\end{cases}
\end{equation}
and $K$ is a $\beta$-stable L\'evy basis on $\R_+ \times \R$ with control measure $\alpha \xi^{\alpha} d\xi ds$ such that $C\left\{ \zeta \ddagger K(A)\right\} = \kappa_{\mathcal{S}_{\beta} (\widetilde{\sigma}_{2,\beta}, \widetilde{\rho}_{2,\beta}, 0)}(\zeta)$ with 
\begin{equation*}
\widetilde{\sigma}_{2,\beta} = \left( \frac{\Gamma(2-\beta)}{1-\beta} (c^-+c^+)  \cos \left(\frac{\pi \beta}{2}\right) \right)^{1/\beta}, \qquad \widetilde{\rho}_{2,\beta} = \frac{c^- - c^+}{c^-+c^+},
\end{equation*}
and  $c^-, c^+$ as in \eqref{LevyMCond}. The limit process $\{Z_{\alpha, \beta}\}$ has stationary increments and is self-similar with index $H=1-\alpha/\beta \in (1/\beta, 1)$.
\end{enumerate}
\end{enumerate}
\end{theorem}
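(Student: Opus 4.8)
The plan is to establish the convergence of finite-dimensional distributions by computing the joint cumulant function of the normalized values and matching it to that of the stated limit. First I would use a stochastic Fubini theorem to turn \eqref{integratedsupOU} and \eqref{supOU} into a single stochastic integral against $\Lambda$,
\[
X^*(Tt) = \int_{\R_+}\int_{\R} g_{Tt}(\xi,s)\,\Lambda(d\xi,ds), \qquad g_{Tt}(\xi,s)=\int_0^{Tt} e^{-\xi u+s}\mathbf{1}_{[0,\infty)}(\xi u - s)\,du,
\]
whose kernel evaluates to $e^s\xi^{-1}(1-e^{-\xi Tt})$ for $s\le 0$ and to $\xi^{-1}(1-e^{s-\xi Tt})$ for $0<s\le\xi Tt$. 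By \eqref{cumofLam} the joint cumulant of $\sum_j\zeta_j X^*(Tt_j)$ is then the deterministic integral $\int_{\R_+}\int_{\R}\kappa_L\big(\sum_j\zeta_j g_{Tt_j}(\xi,s)\big)\pi(d\xi)\,ds$, so the whole problem reduces to the asymptotics of this integral once the $\zeta_j$ are divided by the appropriate normalization $A_T$.

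The analytic heart is the two-sided behavior of $\kappa_L$. Near $0$ it is regularly varying of index $\gamma$, governed by the tails of $\mu$ through \eqref{regvarofL}; at $\infty$ it grows like a stable cumulant of index $\beta$, with scale $\widetilde\sigma_{2,\beta}$ and skewness $\widetilde\rho_{2,\beta}$, governed by $\mu$ near the origin through \eqref{LevyMCond}. Since $g_{Tt}(\xi,s)$ is of order $\xi^{-1}$ and essentially constant in $s$ over the window $0<s<\xi Tt$ whose length is of order $\xi Tt$, the region where $\xi$ is bounded away from $0$ feeds small arguments into $\kappa_L$ and contributes the factor $Tt$ together with the moment $\int_0^\infty\xi^{1-\gamma}\pi(d\xi)$ -- this is the short-range, heavy-tail mechanism. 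The region where $\xi$ is of order $1/T$ makes the kernel of order $T$, feeds large arguments into $\kappa_L$, and, combined with the regular variation \eqref{regvarofp} of $\pi$ at the origin, is the long-range mechanism. The three resulting candidate rates $T^{1/\gamma}$, $T^{1/(1+\alpha)}$ and $T^{1-\alpha/\beta}$ compete, and the theorem records which one wins.

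For part (I), when $\gamma<1+\alpha$ the heavy-tail mechanism dominates. Normalizing by $A_T=T^{1/\gamma}k^{\#}(T)^{1/\gamma}$, where the de Bruijn conjugate arises from inverting the relation $T k(A_T)A_T^{-\gamma}\to\mathrm{const}$, I would show the cumulant integral converges to $t\,\kappa_{\mathcal{S}_\gamma(\widetilde\sigma_{1,\gamma},\rho,0)}(\zeta)$ with all cross-time terms vanishing, which identifies the $\gamma$-stable L\'evy limit; the moment $\int_0^\infty\xi^{1-\gamma}\pi(d\xi)$ is exactly the one entering $\widetilde\sigma_{1,\gamma}$. For part (II.b), when $\beta>1+\alpha$ the long-range mechanism dominates. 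The substitution $\xi=\eta/T$, $s=\eta r$ sends $\frac{\eta}{T}g_{Tt}(\eta/T,\eta r)$ to the kernel $\mff(\eta,t-r)-\mff(\eta,-r)$ of \eqref{Zalphabeta} and sends $\pi(d\xi)\,ds$ to $\alpha\ell(T)T^{-\alpha}\eta^\alpha\,d\eta\,dr$ by \eqref{regvarofp}; feeding the large argument of order $T/\eta$ into the $\beta$-stable asymptotics of $\kappa_L$ and normalizing by $T^{1-\alpha/\beta}\ell(T)^{1/\beta}$ yields precisely the cumulant of $Z_{\alpha,\beta}(t)$ (the $\eta$-integral converging exactly because $\beta>1+\alpha$).

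For part (II.a), when $\beta<1+\alpha$, the self-similar scaling of (II.b) breaks down -- the corresponding $\eta$-integral now diverges at infinity -- and the common rate becomes $T^{1/(1+\alpha)}$. Here I would follow the decomposition $X^*=X_1^*+X_2^*$ of Lemmas \ref{lemma:X1case2} and \ref{lemma:X2LRD}, separating the part controlled by the small-jump index $\beta$ from the long-range-dependence part, show that each converges to an independent $(1+\alpha)$-stable L\'evy process with scale $\widetilde\sigma_{1,\beta}$, respectively $\widetilde\sigma_{2,\alpha}$, and conclude that the sum is $(1+\alpha)$-stable with $\widetilde\sigma^{1+\alpha}=\widetilde\sigma_{1,\beta}^{1+\alpha}+\widetilde\sigma_{2,\alpha}^{1+\alpha}$ and the stated skewness. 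The recurring obstacle is the interchange of limit and integration in the cumulant integrals: one must produce a dominating function uniform in $T$ by splitting the $(\xi,s)$-domain and using Potter bounds for $k$, $\ell$ and for the regularly varying $\kappa_L$ near both $0$ and $\infty$, while handling the compensator $-i\zeta x\mathbf{1}_{[-1,1]}(x)$ and the centering $\E X(1)=0$ when $\gamma$ or $\beta$ exceeds $1$, and the logarithmic corrections at $\gamma=1$ or $\beta=1$. The single most delicate point is part (II.a): because two genuinely different mechanisms contribute at the same order $T^{1/(1+\alpha)}$, the additivity $\widetilde\sigma^{1+\alpha}=\widetilde\sigma_{1,\beta}^{1+\alpha}+\widetilde\sigma_{2,\alpha}^{1+\alpha}$ requires showing that the mixed cumulant of $X_1^*$ and $X_2^*$ is asymptotically negligible, i.e.\ that the two parts decouple in the limit.
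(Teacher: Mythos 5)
Your overall strategy is essentially the paper's: reduce the finite-dimensional distributions to the deterministic cumulant integral via \eqref{integrationrule}, decide which of the three rates $T^{1/\gamma}$, $T^{1/(1+\alpha)}$, $T^{1-\alpha/\beta}$ dominates, and justify the limit by dominated convergence with Potter bounds. The organizational difference is that the paper never works with the full $\kappa_L$ and its two regimes at once: it first applies the L\'evy--It\^o decomposition $X=X_1+X_2$ (jumps of size $>1$ versus $\le 1$), so that your two mechanisms live in \emph{exactly} independent components. This buys two things your single-integral route forgoes. First, $X_2$ has all moments finite, so Lemmas \ref{lemma:X2SRD} and \ref{lemma:X2LRD} are imported verbatim from \cite{GLT2017Limit}, and the only new estimates are for the compound-Poisson part, where $|\kappa_{L_1}(\zeta)|\le C|\zeta|$ for large $|\zeta|$ (available because $\mu_1$ is finite) replaces the $|\zeta|^\beta$ growth of the full $\kappa_L$ and simplifies every domination argument. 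Second, your ``single most delicate point'' in (II.a) is a non-issue: $X_1^*$ and $X_2^*$ are integrals against independent L\'evy bases, so the joint cumulant splits identically ($\kappa_L=\kappa_{L_1}+\kappa_{L_2}$ exactly) and the additivity of $\widetilde{\sigma}^{1+\alpha}$ is just \cite[Property 1.2.1]{samorodnitsky1994stable}; there is no mixed cumulant to estimate even in your formulation. The genuinely delicate step you omit is \emph{within} each component: splitting the increment $X_1^*(Tt_i)-X_1^*(Tt_{i-1})$ into the part driven by noise from before $Tt_{i-1}$ (shown negligible, which consumes \eqref{pifinitemean}) and the fresh part, which is what makes the limiting increments independent.

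There is, however, a concrete gap in your part (I), where you assert that ``when $\gamma<1+\alpha$ the heavy-tail mechanism dominates'' without checking it. Dominance over the $Z_{\alpha,\beta}$ mechanism requires $1-\alpha/\beta<1/\gamma$, equivalently $\gamma<\beta/(\beta-\alpha)$. The hypothesis $\gamma<1+\alpha$ guarantees this when $\gamma\le 1$ or $\beta\le 1+\alpha$, but for $1<\gamma<1+\alpha$ and $\beta>\max\{1+\alpha,\,\alpha\gamma/(\gamma-1)\}$ (a nonempty region: e.g.\ $\alpha=0.6$, $\gamma=1.5$, $\beta=1.9$ gives $1-\alpha/\beta\approx 0.684>0.667\approx 1/\gamma$) the small-jump contribution outgrows $T^{1/\gamma}$, so the normalization $T^{1/\gamma}k^{\#}(T)^{1/\gamma}$ cannot kill it and your cumulant integral will not converge to $t\,\kappa_{\mathcal{S}_\gamma(\widetilde{\sigma}_{1,\gamma},\rho,0)}(\zeta)$; the large-argument region of $\kappa_L$ is not negligible there. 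Be aware that the paper's own verification of (I) has the same problem: the chain $1-\alpha/\beta<1+(1-\gamma)/\beta<1+(1-\gamma)/\gamma=1/\gamma$ uses $(1-\gamma)/\beta<(1-\gamma)/\gamma$, which for $\gamma>1$ requires $\beta<\gamma$ and is false when $\beta>1+\alpha>\gamma$. So in that sub-region either an extra hypothesis such as $\gamma<\beta/(\beta-\alpha)$ is needed or the limit must be $Z_{\alpha,\beta}$; in any case the rate comparison has to be carried out explicitly rather than recorded as ``the theorem says which one wins.''
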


\begin{remark}\label{rem:con}
We note that for the proof of Theorem \ref{thm:mainb=0}(I) when $\gamma<1$ one could replace \eqref{pifinitemean} with the assumption that there exists $\varepsilon>0$ such that
\begin{equation*}%\label{thm:SRD1case:condition:gamale1}
\int_0^\infty \xi^{1-\gamma+\varepsilon} \pi(d\xi) < \infty.
\end{equation*}
Also, for the proof of Theorem \ref{thm:mainb=0}(II.a) instead of assuming \eqref{LevyMCond} with $\beta<1+\alpha$, it is enough to assume that the Blumenthal-Getoor index \eqref{BGindex} satisfies $\beta_{BG}<1+\alpha$. 
\end{remark}

The first boundary between different limit types in Theorem \ref{thm:mainb=0} is given by $\gamma=1+\alpha$. By choosing formally $\gamma=2$, we obtain $\alpha=1$ which corresponds to the boundary between short-range and long-range dependence in the finite variance case (see \cite{GLT2017Limit}). 

In the infinite variance case, the regular variation index $\gamma$ of the marginal tails seems to play an important role in the limit only when $\gamma<1+\alpha$. One could say that in this scenario \textit{the tails dominate the dependence structure}. In the opposite case $\gamma>1+\alpha$, two classes of stable processes may arise as a limit, either with dependent or independent increments. This depends on the value of parameter $\beta$. 

Note also that if $\beta<1+\alpha<\gamma$, the limiting process $L_{1+\alpha}$ has heavier tails than the supOU process whose tails are characterized by $\gamma$. On the other hand, when $1+\alpha<\gamma$ and $1+\alpha<\beta$ the limiting process has $\beta$-stable marginals hence, depending on whether $\beta>\gamma$ or $\beta<\gamma$, the tails of the limit can be lighter or heavier than the tails of the underlying supOU process.

We now consider the case when the Gaussian component is present in the characteristic quadruple, that is $b\neq 0$. This is the main difference between Theorem \ref{thm:mainb=0} and Theorem \ref{thm:mainb!=0}.

\begin{theorem}\label{thm:mainb!=0}
Suppose that the supOU process $\{X(t), \, t\in \R\}$ is such that
\begin{itemize}
\item $b\neq 0$, thus has a Gaussian component,
\item the marginal distribution satisfies \eqref{regvarofX} with $0<\gamma<2$,
\item the behavior at the origin of the L\'evy measure $\mu$ is given by \eqref{LevyMCond} with $0\leq \beta<2$,
\item $\pi$ has a density $p$ satisfying \eqref{regvarofp} with $\alpha>0$ and some slowly varying function $\ell$ and \eqref{pifinitemean} holds. 
\end{itemize}
\begin{enumerate}[(I)]
\item If $\alpha> 1$ or if $\alpha<1$ and $\gamma < \frac{2}{2-\alpha}$, then as $T\to \infty$
\begin{equation*}
\left\{ \frac{1}{T^{1/\gamma} k^{\#}(T)^{1/\gamma}} X^*(Tt) \right\} \overset{fdd}{\to} \left\{L_{\gamma} (t) \right\},
\end{equation*}
where the limit $\{L_{\gamma}\}$ is a $\gamma$-stable L\'evy process defined as in Theorem \ref{thm:mainb=0}(I). 

\item If $\alpha<1$ and $\gamma > \frac{2}{2-\alpha}$, then as $T\to \infty$
\begin{equation*}
\left\{ \frac{1}{T^{1-\alpha/2 } \ell(T)^{1/2}} X^*(Tt) \right\} \overset{fdd}{\to} \left\{\widetilde{\sigma}_{3,\alpha} B_H(t) \right\},
\end{equation*}
where $\{ B_H(t)\}$ is standard fractional Brownian motion with $H=1-\alpha/2$ and $\widetilde{\sigma}_{3,\alpha} = b^2/2  \frac{\Gamma(1+\alpha)}{(2-\alpha)(1-\alpha)}$.
\end{enumerate}
\end{theorem}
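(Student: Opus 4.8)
The plan is to peel off the Gaussian component by the L\'evy-It\^o decomposition of the L\'evy basis and to handle the two resulting independent pieces separately: the jump piece by the already-established Theorem \ref{thm:mainb=0}, and the Gaussian piece by a direct second-moment computation. Writing the characteristic quadruple as $(a,b,\mu,\pi)$ with $b\neq0$, I would decompose $\Lambda=\Lambda_G+\Lambda_0$, where $\Lambda_G$ is the Gaussian L\'evy basis with triplet $(0,b,0)$ and $\Lambda_0$ the L\'evy basis with triplet $(a,0,\mu)$; these are independent with common control measure $m=\pi\times Leb$. Correspondingly $X=X_G+X_0$ and $X^*(Tt)=X_G^*(Tt)+X_0^*(Tt)$ with $X_G$ and $X_0$ independent. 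Since $X_G(1)$ is Gaussian and hence light-tailed, the regularly varying tails \eqref{regvarofX} of $X(1)=X_G(1)+X_0(1)$ are inherited, with the same $p,q,k,\gamma$, by $X_0(1)$; as $X_0$ is driven by the same $\mu$ and $\pi$, it meets all hypotheses of Theorem \ref{thm:mainb=0} with the same $\gamma,\beta,\alpha$ and no Gaussian component.

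Next I would record the behavior of each piece. For $X_0^*$, Theorem \ref{thm:mainb=0} supplies the exact rate and limit in every sub-regime: rate $T^{1/\gamma}k^{\#}(T)^{1/\gamma}$ with limit $L_\gamma$ when $\gamma<1+\alpha$, and rate $T^{1/(1+\alpha)}\ell^{\#}(T)^{1/(1+\alpha)}$ or $T^{1-\alpha/\beta}\ell(T)^{1/\beta}$ otherwise. For the Gaussian piece, $X_G$ is a stationary Gaussian supOU process of variance $b/2$ and correlation $r(\tau)=\int_{\R_+}e^{-\tau\xi}\pi(d\xi)$ as in \eqref{corrsupOU}, so by \eqref{regvarofpi} $r(\tau)\sim\Gamma(1+\alpha)\ell(\tau)\tau^{-\alpha}$. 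Being Gaussian, $X_G^*$ is determined by $\Cov(X_G^*(Tt),X_G^*(Ts))=\tfrac{b}{2}\int_0^{Tt}\!\int_0^{Ts}r(|u-v|)\,du\,dv$, and a Karamata computation of this double integral shows that, for $\alpha<1$, dividing by $T^{2-\alpha}\ell(T)$ yields $\widetilde\sigma_{3,\alpha}^2$ times the fractional covariance $\tfrac12\big(t^{2H}+s^{2H}-|t-s|^{2H}\big)$ with $H=1-\alpha/2$; hence $X_G^*(Tt)/\big(T^{1-\alpha/2}\ell(T)^{1/2}\big)\overset{fdd}{\to}\widetilde\sigma_{3,\alpha}B_H(t)$, while for $\alpha>1$ the correlation is integrable and $\Var(X_G^*(T))=O(T)$.

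Finally I would match the rates. In case (I), $\gamma<1+\alpha$ holds throughout (using $\tfrac{2}{2-\alpha}\le1+\alpha$ for $0<\alpha<1$), and the hypotheses force $1/\gamma>\max\{1-\alpha/2,\tfrac12\}$; since $\Var(X_G^*(Tt))$ grows like $T^{\max\{2-\alpha,1\}}$ up to a slowly varying factor, it is $o\big(T^{2/\gamma}k^{\#}(T)^{2/\gamma}\big)$, so the Gaussian piece vanishes in $L^2$ under the rate $T^{1/\gamma}k^{\#}(T)^{1/\gamma}$, while $X_0^*$ converges to $L_\gamma$ by Theorem \ref{thm:mainb=0}(I); independence and Slutsky give (I). In case (II), $\alpha<1$ and $1/\gamma<1-\alpha/2$, and one checks the three exponent inequalities $1/\gamma<1-\alpha/2$, $1/(1+\alpha)<1-\alpha/2$ and $1-\alpha/\beta<1-\alpha/2$ (the last since $\beta<2$), so in every sub-regime the jump rate grows strictly slower than $T^{1-\alpha/2}\ell(T)^{1/2}$; expressing $X_0^*(Tt)/\big(T^{1-\alpha/2}\ell(T)^{1/2}\big)$ as its convergent normalization times a vanishing ratio of rates forces it to $0$ by Slutsky, and the Gaussian piece supplies the fractional Brownian limit, proving (II). I expect the main obstacle to be the justification that $X_0(1)$ inherits the \emph{exact} regularly varying tails of $X(1)$ after convolution with the independent Gaussian $X_G(1)$, together with the Karamata asymptotics pinning down the constant $\widetilde\sigma_{3,\alpha}$ in the Gaussian piece; everything else reduces to rate comparisons and Slutsky's theorem.
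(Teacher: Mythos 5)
Your argument is correct and rests on the same underlying mechanism as the paper's proof --- a L\'evy--It\^o splitting of the basis followed by a comparison of the polynomial orders of the normalizing sequences of the independent pieces --- but it is organized differently. The paper keeps the three-way decomposition $X=X_1+X_2+X_3$ (large jumps, small jumps, Gaussian) and compares the rates supplied by Lemmas \ref{lemma:X1case1}--\ref{lemma:gaussiancase} directly, so it never needs to say anything about the marginal law of the non-Gaussian part as a whole. You instead split off only the Gaussian component, $X=X_G+X_0$, and invoke Theorem \ref{thm:mainb=0} as a black box for $X_0^*$. That is a legitimate and arguably cleaner top-level route, but it requires the one extra step you flag yourself: verifying that $X_0(1)$ satisfies \eqref{regvarofX} with the same $p,q,k,\gamma$ as $X(1)$. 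This is easily discharged --- either by the standard fact that convolution with an independent light-tailed (here Gaussian) factor preserves the exact regular-variation asymptotics of the tail, or, more in the spirit of the paper, by noting that $X_0$ has the same L\'evy measure $\mu$ as $X$, so \eqref{equivalence of tails} together with $M^{\pm}(x)\sim P(\pm L(1)>x)$ pins down the tails of $X_0(1)$ without ever referring to $X(1)$. Your direct Karamata computation of $\Cov(X_G^*(Tt),X_G^*(Ts))$ is a self-contained substitute for the paper's Lemma \ref{lemma:gaussiancase} (which simply cites the finite-variance paper); since the family is centered Gaussian, convergence of covariances does give convergence of finite-dimensional distributions. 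The exponent inequalities you check --- $2/(2-\alpha)\le 1+\alpha$ for $\alpha<1$, $1/\gamma\gtrless 1-\alpha/2$ according as $\gamma\lessgtr 2/(2-\alpha)$, $1/(1+\alpha)<1-\alpha/2$, and $1-\alpha/\beta<1-\alpha/2$ from $\beta<2$ --- are exactly those appearing in the paper's proof, so the rate comparison and the Slutsky step go through as you describe.
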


When the Gaussian component is present in the characteristic quadruple, the parameter $\beta$ is irrelevant for the type of the limit process and there are only two possible limits. One is the L\'evy stable motion $\{L_\gamma(t), \, t\geq 0\}$ that would have been a limit if $\{X^*(t), \, t\geq 0\}$ had independent increments. The second is the Gaussian fractional Brownian motion. In the first case, the limit has independent but infinite variance increments and in the second case the limit has dependent increments but their distribution is Gaussian.

Theorem \ref{thm:mainb!=0} also provides an example of a limit theorem where the aggregated process has infinite variance, but the limiting process is fractional Brownian motion which has all the moments finite. 

Figures \ref{fig:limits:Xb=01} and \ref{fig:limits:Xb!=01} illustrate the limiting behavior graphically.

\begin{figure}[h!]
\centering
\includegraphics[width=0.65\textwidth]{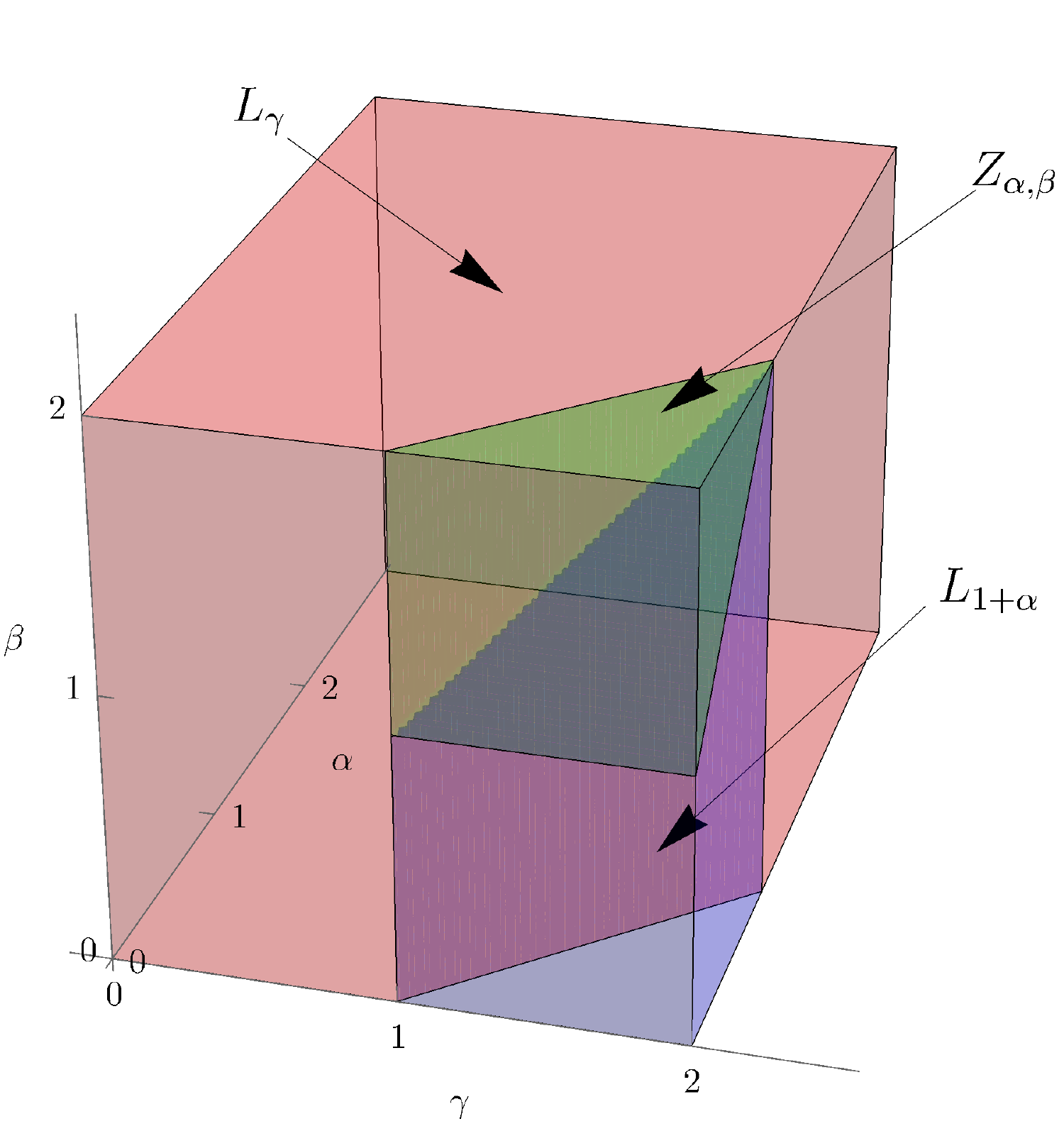}
\caption{Classification of limits of $X^*$ when $b=0$}
\label{fig:limits:Xb=01}
\end{figure}

\begin{figure}[h!]
\centering
\includegraphics[width=0.8\textwidth]{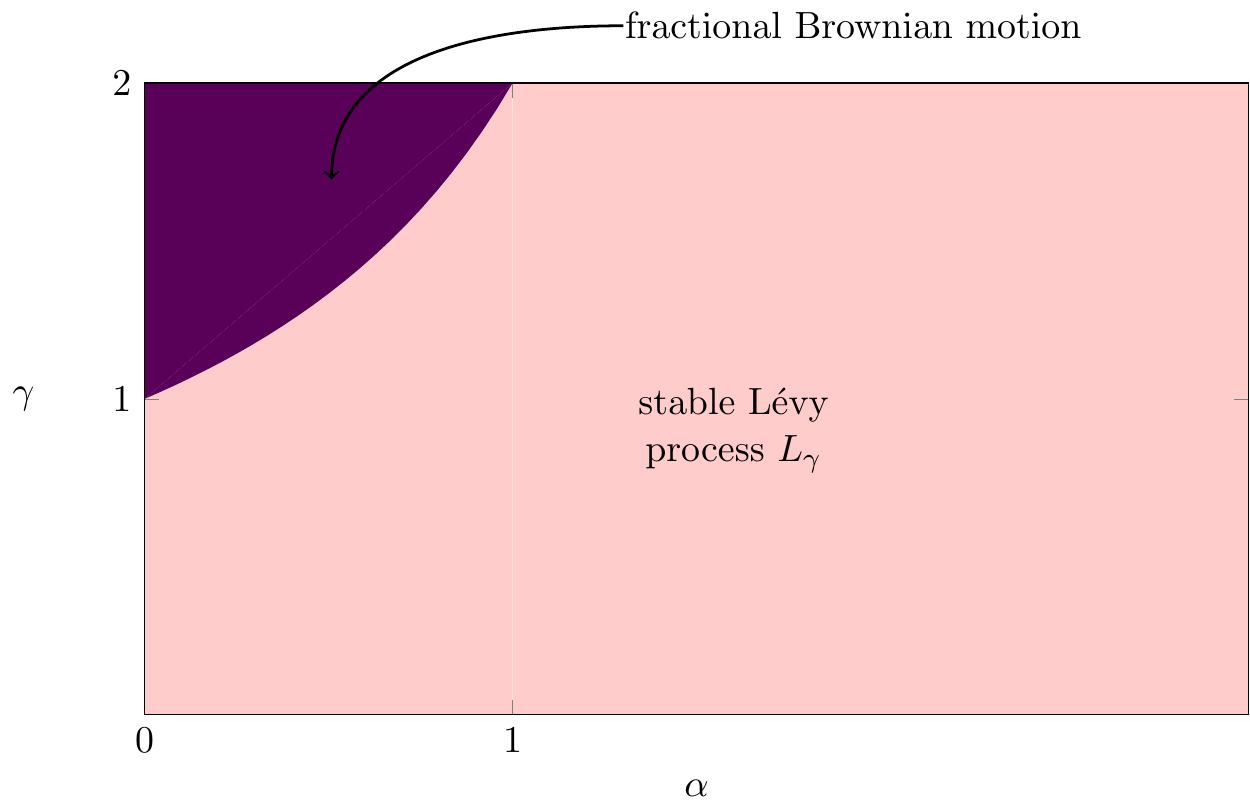}
\caption{Classification of limits of $X^*$ when $b\neq0$}
\label{fig:limits:Xb!=01}
\end{figure}

\section{Examples}\label{sec4}
In this section we list several examples of supOU process and show how Theorems \ref{thm:mainb=0} and \ref{thm:mainb!=0} apply. In each example we will fix the distribution of the background driving L\'evy process while $\pi$ may be any absolutely continuous probability measure satisfying \eqref{regvarofp}. For example, $\pi $ can be Gamma distribution with density
\begin{equation*}
  f(x)= \frac{1}{\Gamma(\alpha)} x^{\alpha-1} e^{-x} \mathbf{1}_{(0,\infty)}(x),
\end{equation*}
where $\alpha>0$. Then
\begin{equation*}
  \pi ((0,x]) \sim \frac{1}{\Gamma(\alpha+1)} x^{\alpha}, \quad \text{ as } x \to 0.
\end{equation*}
Other examples can be found in \cite{GLST2017Arxiv}.

In each of the examples bellow, we choose a background driving L\'evy process such that $L(1)$ is a heavy-tailed distribution satisfying \eqref{regvarofL} with $0<\gamma<2$ and \eqref{LevyMCond} holds or the Blumenthal-Getoor index \eqref{BGindex} is known. 

Note that by appropriately choosing the background driving L\'{e}vy process $L$, one can obtain any self-decomposable distribution as a marginal distribution of $X$. Recall that an infinitely divisible random variable $X$ is \textit{selfdecomposable} if its characteristic function $\phi (\theta)=\E e^{i\theta X}$, $\theta \in \R$, has the property that for every $c\in (0,1)$ there exists a characteristic function $\phi_{c}$ such that $\phi (\theta)=\phi (c\theta)\phi _{c}(\theta)$ for all $\theta\in \R$ (see e.g.~\cite{sato1999levy}). Equivalently, for every $c \in (0,1)$ there is a random variable $Y_c$ such that the random variable $X$ has the same distribution as $cX+Y_c$.

Each of distributions given in examples below may be imposed as a distribution of $X(t)$. Indeed, every distribution considered in the following examples is self-decompo\-sable (see references cited below), hence there exists a background driving L\'evy process generating a supOU process with such marginal distribution. Furthermore, if \eqref{regvarofX} holds, then $L(1)$  satisfies \eqref{regvarofL} by \eqref{equivalence of tails}. If \eqref{LevyMCondforX} holds for the L\'evy measure of $X(1)$, then this implies \eqref{LevyMCond} for the L\'evy measure of $L(1)$. Hence, Theorems \ref{thm:mainb=0} and \ref{thm:mainb!=0} may still be applied as the conditions on the background driving L\'evy process are easily translated to the corresponding conditions on the marginals of the supOU process.

\subsection{Compound Poisson background driving L\'evy process}
Let $L$ be a compound Poisson process with rate $\lambda>0$ and infinite variance jump distribution $F$ regularly varying at infinity. More precisely, $F$ satisfies
\begin{equation*}%\label{regvarofL}
F((x,\infty)) \sim p \gamma k(x) x^{-\gamma} \quad \text{ and } \quad F((-\infty,-x]) \sim q \gamma k(x) x^{-\gamma}, \quad  \text{ as } x\to \infty,
\end{equation*}
for some $0<\gamma<2$ and $k$ slowly varying at infinity. If $F$ has a finite mean, then we assume it is zero. Suppose $X$ is a supOU process with the background driving L\'evy process $L$ and $\pi$ absolutely continuous probability measure satisfying \eqref{regvarofp}. The characteristic quadruple \eqref{quadruple} is then $(a,0,\mu,\pi)$ where
\begin{equation*}
a=\lambda \int_{|x|\leq 1} x F(dx), \qquad \mu(dx) = \lambda F(dx).
\end{equation*}
Since the L\'evy measure is finite, this case corresponds to $\beta=0$ in \eqref{LevyMCond}. Hence, Theorem \ref{thm:mainb=0} applies to show that the limit is stable L\'evy process with index $\gamma$ if $\gamma<1+\alpha$ or with index $1+\alpha$ if $\gamma>1+\alpha$.

\subsection{Stable background driving L\'evy process}
Let $L$ be a $\gamma$-stable L\'evy process generating supOU process $X$ with characteristic quadruple \eqref{quadruple} given by $(a,0,\mu,\pi)$ where
\begin{equation*}
\mu(dx) = \begin{cases}
c_1 x^{-\gamma-1}dx,& x \in (0,\infty),\\
c_2 |x|^{-\gamma-1}dx,& x \in (-\infty,0),
\end{cases}
\end{equation*}
with $c_1,c_2\geq 0$, $c_1+c_2>0$ if $\gamma\neq 1$ and $c_1=c_2$ if $\gamma=1$. If $\alpha>1$, we additionally assume $\E X(1)=0$. The L\'evy measure satisfies \eqref{LevyMCond} with $\beta=\gamma$ and from Theorem \ref{thm:mainb=0} we conclude that if $\gamma<1+\alpha$, the limit is $\gamma$-stable L\'evy process and if $\gamma>1+\alpha$, then the limit is stable process $Z_{\alpha,\gamma}$ defined in Theorem \ref{thm:mainb=0} (II.b). This type of limiting behavior was obtained by \cite{puplinskaite2010aggregation} for aggregated AR(1) processes with stable marginals.

\subsection{Student's background driving L\'evy process}
Let $L$ be a L\'evy process such that $L(1)$ has Student's $t$-distribution given by the density
\begin{equation*}
f(x) =  \frac{\Gamma\left(\frac{\gamma+1}{2}\right)}{\delta \Gamma\left(\frac{1}{2}\right) \Gamma\left(\frac{\gamma}{2}\right)} 
\left(1+\left( \frac{x-c}{\delta} \right)^2 \right)^{-\frac{\gamma+1}{2}}, \quad x \in \mathbb{R},
\end{equation*}
where $c\in \R$ is location parameter, $\delta>0$ scale parameter and the degrees of freedom $0<\gamma<2$ correspond to the tail index of the distribution of $L(1)$ as in \eqref{regvarofL}. If $\gamma>1$, we assume $c=0$, hence $\E L(1)=0$. The L\'evy-Khintchine triplet in \eqref{kappacumfun} is $(c,0,\mu)$ with L\'evy measure $\mu$ absolutely continuous with density
\begin{equation*}
g(x) = \frac{1}{|x|} \int_0^{\infty} \frac{e^{-|x| \sqrt{2y}}}{\pi^2 y (J_{\gamma/2}^2(\delta \sqrt{2y}) + Y_{\gamma/2}^2(\delta \sqrt{2y})) } dy,
\end{equation*}
where $J_{\gamma/2}$ and $Y_{\gamma/2}$ denote the Bessel functions of the first and the second kind, respectively (see e.g.~\cite{heyde2005student}). By \cite[Eq.~(7.14)]{eberlein2004generalized} we have
\begin{equation*}
g(x)\sim \frac{\delta}{\pi} x^{-2}, \quad \text{ as } x\to 0,
\end{equation*}
and by using Karamata's theorem \cite[Theorem 1.5.11]{bingham1989regular} it follows that
\begin{equation*}
\mu \left( [x, \infty) \right) \sim \mu \left( (-\infty, -x] \right) \sim \frac{\delta}{\pi} x^{-1}, \quad \text{ as } x\to 0.
\end{equation*}
Hence, $\beta=1$ in \eqref{LevyMCond}. Let $\pi$ be an absolutely continuous probability measure satisfying \eqref{regvarofp}. Then the characteristic quadruple \eqref{quadruple} is $(c,0,\mu,\pi)$. By Theorem \ref{thm:mainb=0} the limits are as in the compound Poisson case, namely, stable L\'evy process with index $\gamma$ if $\gamma<1+\alpha$ or with index $1+\alpha$ if $\gamma>1+\alpha$.

\subsection{Geometric stable background driving L\'evy process}\label{ex:geom:stable}
A random variable $Y$ has a geometric stable distribution if its characteristic function has the form
\begin{equation*}
\E e^{i\zeta Y} = \frac{1}{1-\kappa_{\mathcal{S}_\gamma (\sigma, \rho, c)}(\zeta)}, \quad \zeta \in \R,
\end{equation*}
where $\kappa_{\mathcal{S}_\gamma (\sigma, \rho, c)}$ is the cumulant function \eqref{cum:stable} of some stable distribution $\mathcal{S}_\gamma (\sigma, \rho, c)$. The case $\rho=c=0$ yields the so-called Linnik distribution with characteristic function (\cite{kotz2001laplace, bakeerathan2008linnik})
\begin{equation*}
\E e^{i\zeta Y} = \frac{1}{1+\sigma^\gamma |\zeta|^\gamma}, \quad \zeta \in \R.
\end{equation*}
On the other hand, geometric stable distribution with $0<\gamma< 1$, $\sigma=\cos (\pi\gamma/2)^{1/\gamma}$, $\rho=1$ and $c=0$ is known as the Mittag-Leffler distribution (see \cite{kozubowski2001fractional}). 

Let $L$ be a L\'evy process such that $L(1)$ has geometric stable distribution. For $0<\gamma<2$, geometric stable distributions have regularly varying tails with index $\gamma$ (see e.g.~\cite{kozubowski1996moments}), hence \eqref{regvarofL} holds. On the other hand, the mass of the L\'evy measure near origin increases at the logarithmic rate, hence the Blumenthal-Getoor index \eqref{BGindex} is $0$ (see \cite{kozubowski1999tails} for details). Since the characteristic quadruple has no Gaussian component, we conclude from Theorem \ref{thm:mainb=0} and Remark \ref{rem:con} that the limit is stable L\'evy process with index $\gamma$ if $\gamma<1+\alpha$ or with index $1+\alpha$ if $\gamma>1+\alpha$.

\section{Proofs}\label{sec:proofs}

The proofs of Theorems \ref{thm:mainb=0} and \ref{thm:mainb!=0} are based on the L\'evy-It\^{o} decomposition of the background driving L\'evy process $L$ and the corresponding decomposition of the integrated process $X^*$. Let $\mu_1(dx)=\mu(dx) \1_{|x|>1}(dx)$ and $\mu_{2}(dx)=\mu(dx) \1_{|x|\leq1}(dx)$ where $\mu$ is the L\'evy measure of the L\'evy process $L$. Then there exists a modification of the L\'evy basis $\Lambda$ for which we can make a decomposition into $\Lambda_1$ with characteristic quadruple $(a,0,\mu_1,\pi)$, $\Lambda_2$ with characteristic quadruple $(0,0,\mu_2,\pi)$ and $\Lambda_3$ with characteristic quadruple $(0,b,0,\pi)$ (see \cite{pedersen2003levy}, \cite[Theorem 2.2]{barndorff2011multivariate} and \cite{moser2013functional}). We assume in the following $\Lambda$ is already a modification with L\'evy-It\^{o} decomposition. Let $L_1(t)$, $L_2(t)$ and $L_3(t)$, $t\in \R$ denote the corresponding background driving L\'evy processes which have the following cumulant functions:
\begin{align}
C \left\{ \zeta \ddagger L_1(1) \right\} & = i\zeta a +\int_{\R}\left( e^{i\zeta x}-1\right) \mu_1(dx) = i\zeta a +\int_{|x|>1}\left( e^{i\zeta x}-1\right) \mu(dx),\label{kappaL1}\\
C \left\{ \zeta \ddagger L_2(1) \right\} &=\int_{\R}\left( e^{i\zeta x}-1-i\zeta \mathbf{1}_{[-1,1]}(x)\right) \mu_2(dx)\nonumber\\
&=\int_{|x|\leq 1}\left( e^{i\zeta x}-1-i\zeta \mathbf{1}_{[-1,1]}(x)\right) \mu(dx) ,\nonumber\\
C \left\{ \zeta \ddagger L_3(1) \right\} &= -\frac{\zeta ^{2}}{2} b.\nonumber
\end{align}
Note that $L_1$ is a compound Poisson process and $L_3$ is Brownian motion. Consequently, we can represent $X(t)$ as
\begin{equation}\label{e:decomposition}
\begin{aligned}
X(t) &= \int_{0}^\infty \int_{-\infty }^{\xi t}e^{-\xi t + s} \Lambda_1(d\xi,ds) + \int_{0}^\infty \int_{-\infty }^{\xi t}e^{-\xi t + s} \Lambda_2(d\xi,ds)\\
&\hspace{4cm}+ \int_{0}^\infty \int_{-\infty }^{\xi t}e^{-\xi t + s} \Lambda_3(d\xi,ds)\\
&=: X_1(t) + X_2(t) + X_3(t),
\end{aligned}
\end{equation}
with $X_1$, $X_2$ and $X_3$ independent. Let $X^*_1$, $X^*_2$ and $X_3^*$ denote the corresponding integrated processes which are independent. To obtain the limiting behavior of the integrated process $X^*$ we first establish limit theorems for each process $X^*_1$, $X^*_2$ and $X_3^*$ separately.

\subsection{The process $X_1^*$}

When the supOU process has finite variance, then 
\begin{equation}\label{e:critcondFV}
\int_0^\infty \xi^{-1} \pi(d \xi)< \infty
\end{equation}
if and only if the correlation function is integrable (see \cite{GLT2017Limit}). If this is the case, then the integrated process after suitable normalization converges to Brownian motion. When the variance is infinite, then, assuming \eqref{regvarofX}, one may expect $\gamma$-stable L\'evy process in the limit.  

We first prove this for the compound Poisson component $X_1^*$. In this setting, the critical condition turns out to be
\begin{equation}\label{e:critcond}
\int_0^\infty \xi^{1-\gamma} \pi(d \xi)< \infty.
\end{equation}
Note that choosing formally $\gamma=2$ corresponds to the critical condition \eqref{e:critcondFV} in the finite variance case.

\begin{lemma}\label{lemma:X1case1}
Suppose that  there exists an $\varepsilon>0$ such that
\begin{equation}\label{thm:SRD1case:condition:gamale1}
\int_0^\infty \xi^{1-\gamma+\varepsilon} \pi(d\xi) < \infty \quad \text{ if } \gamma\in (0,1),
\end{equation}
or
\begin{equation}\label{thm:SRD1case:condition:gamagr1}
\int_0^\infty  \xi^{1-\gamma-\varepsilon} \pi(d\xi) < \infty  \quad \text{ if } \gamma\in [1,2).
\end{equation}
Then as $T\to \infty$
\begin{equation*}%\label{e:thm:SRD1statement}
\left\{ \frac{1}{T^{1/\gamma} k^{\#}(T)^{1/\gamma}} X_1^*(Tt) \right\} \overset{fdd}{\to} \left\{L_{\gamma} (t) \right\},
\end{equation*}
where the limit $\{L_{\gamma}\}$ is a $\gamma$-stable L\'evy process with the notation as in Theorem \ref{thm:mainb=0}(I).
\end{lemma}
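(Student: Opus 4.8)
The plan is to compute the joint cumulant function of the normalized vector $(X_1^*(Tt_1),\dots,X_1^*(Tt_m))$ directly and show it converges to that of the $\gamma$-stable L\'evy process $\{L_\gamma\}$. First I would use a stochastic Fubini argument to represent
\[
X_1^*(Tt)=\int_0^\infty\!\!\int_\R g_{Tt}(\xi,s)\,\Lambda_1(d\xi,ds),\qquad g_t(\xi,s)=\int_0^t e^{-\xi u+s}\1_{[0,\infty)}(\xi u-s)\,du .
\]
A direct computation gives $g_t(\xi,s)=\xi^{-1}\mff(\xi,t-s/\xi)$ for $s>0$, together with the boundary contribution $\xi^{-1}e^{s}\mff(\xi,t)$ for $s\le0$, with $\mff$ as in \eqref{mff}. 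Since $\Lambda_1$ is an infinitely divisible random measure with control measure $\pi\times Leb$ and one-dimensional cumulant $\kappa_{L_1}$ from \eqref{kappaL1}, the cumulant of any linear combination equals
\[
C\Bigl\{1\ddagger \sum_{j=1}^m\theta_j\,\frac{X_1^*(Tt_j)}{A_T}\Bigr\}
=\int_0^\infty\!\!\int_\R \kappa_{L_1}\Bigl(A_T^{-1}\sum_{j=1}^m\theta_j\, g_{Tt_j}(\xi,s)\Bigr)\pi(d\xi)\,ds,
\]
where $A_T=T^{1/\gamma}k^{\#}(T)^{1/\gamma}$.

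The next step is the change of variables $s=\xi T v$, under which $g_{Tt_j}(\xi,\xi Tv)=\xi^{-1}\mff(\xi,T(t_j-v))=\xi^{-1}(1-e^{-\xi T(t_j-v)})$ on $\{0<v<t_j\}$. Since this converges pointwise to $\xi^{-1}\1_{(0,t_j)}(v)$ as $T\to\infty$, the relevant kernel tends to $\xi^{-1}\Theta(v)$ with $\Theta(v)=\sum_j\theta_j\1_{(0,t_j)}(v)$; in particular consecutive increments have asymptotically disjoint supports in $v$, which is the mechanism producing \emph{independent increments} in the limit. The Jacobian of the substitution is $\xi T\,dv$. In parallel I would record the behavior of $\kappa_{L_1}$ near the origin: because $L_1$ is compound Poisson with balanced regularly varying tails of index $\gamma$ as in \eqref{regvarofL}, the domain-of-attraction asymptotics \cite[Theorem 2.6.1]{ibragimov1971independent} give, up to an asymptotically equivalent slowly varying correction,
\[
\kappa_{L_1}(\zeta)=ia_1\zeta-\gamma\sigma^{\gamma}|\zeta|^{\gamma}k(1/|\zeta|)\bigl(1-i\rho\,\sign(\zeta)\chi(\gamma)\bigr)(1+o(1)),\qquad \zeta\to0,
\]
with $\sigma,\rho$ as in \eqref{sigmaandrho}; when $\gamma>1$ the mean-zero assumption removes the linear term, and when $\gamma=1$ the symmetry assumption removes $\chi$.

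Substituting these two asymptotics into the cumulant integral, the $s>0$ part scales like $A_T^{-\gamma}T\int_0^\infty \xi^{1-\gamma}k(\xi A_T)\,\pi(d\xi)$ times the bounded $v$-integral $\int_0^\infty|\Theta(v)|^{\gamma}(1-i\rho\,\sign(\Theta)\chi(\gamma))\,dv$. The normalization is calibrated exactly so that $A_T^{\gamma}=T\,k^{\#}(T)$, and the defining property of the de Bruijn conjugate $k^{\#}$ forces $k(\xi A_T)/k^{\#}(T)$ to a constant; what survives is the finite integral $\int_0^\infty\xi^{1-\gamma}\pi(d\xi)$ from \eqref{e:critcond}, which assembles into the scale $\widetilde{\sigma}_{1,\gamma}=\sigma(\gamma\int_0^\infty\xi^{1-\gamma}\pi(d\xi))^{1/\gamma}$. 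Using $\int_0^\infty|\Theta(v)|^{\gamma}\,dv=\sum_i(t_i-t_{i-1})\bigl|\sum_{j\ge i}\theta_j\bigr|^{\gamma}$ (with $0=t_0<t_1<\dots<t_m$), I would check that the limiting cumulant equals $\sum_i(t_i-t_{i-1})\,\kappa_{\mathcal{S}_\gamma(\widetilde{\sigma}_{1,\gamma},\rho,0)}\bigl(\sum_{j\ge i}\theta_j\bigr)$, which is precisely the cumulant of $\sum_j\theta_j L_\gamma(t_j)$.

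The main obstacle is the rigorous passage to the limit under the integral sign: one must dominate $\kappa_{L_1}(A_T^{-1}\sum_j\theta_j g_{Tt_j})$ uniformly in $T$ by a $\pi(d\xi)\,ds$-integrable function. This is exactly where the strengthened moment hypotheses \eqref{thm:SRD1case:condition:gamale1}--\eqref{thm:SRD1case:condition:gamagr1} enter: via Potter bounds on the slowly varying $k$ one controls the integrand near $\xi\to0$ and near $\xi\to\infty$, with the extra exponent $\varepsilon$ supplying the integrability margin at the relevant endpoint. The split $\gamma\in(0,1)$ versus $\gamma\in[1,2)$ reflects whether the centering drift must be carried along or vanishes, and calls for separate, routine estimates of the linear term and of the $s\le0$ boundary part of the kernel; the latter scales like $A_T^{-\gamma}$ rather than $A_T^{-\gamma}T$ and is therefore negligible, consistent with its non-contribution to the increments.
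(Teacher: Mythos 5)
Your proposal follows essentially the same route as the paper's proof: the same kernel representation and change of variables $s=\xi T v$, the Ibragimov--Linnik asymptotics of $\kappa_{L_1}$ at the origin, the de Bruijn calibration $A_T^{\gamma}=T\,k^{\#}(T)$, and Potter bounds plus dominated convergence using the $\varepsilon$-strengthened moment conditions; the paper merely packages the joint convergence via Cram\'er--Wold and an explicit split of each increment into an independent ``present'' part and a negligible ``past'' part, which is exactly your $s>0$ versus $s\le 0$ split. The one spot where your sketch is looser than the paper is the $s\le 0$ boundary term for $\gamma\in(1,2)$: the crude bound $|\kappa_{L_1}(\zeta)|\lesssim|\zeta|^{\gamma-\delta}$ there yields a prefactor of order $A_T^{-\gamma+\delta}T^{\gamma-\delta}$, which diverges, so one must additionally exploit $\int_0^\infty\xi^{1-\gamma}\pi(d\xi)<\infty$ (via $x^{-1}(1-e^{-x})\le x^{-a}$) to make it vanish --- a step the paper carries out explicitly and that your ``routine estimates'' remark covers only implicitly.
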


\begin{proof}
Let $0=t_0<t_1<\cdots<t_m$, $\zeta_1,\dots,\zeta_m \in \R$ and $A_T=T^{1/\gamma} k^{\#}(T)^{1/\gamma}$. By the Cram{\'e}r-Wold device, it will be enough to prove that
\begin{equation*}%\label{e:thm:SRD11}
\sum_{i=1}^m \zeta_i A_T^{-1} X_1^*(Tt_{i}) \overset{d}{\to} \sum_{i=1}^m \zeta_i L_{\gamma} (t_i).
\end{equation*}
We can rewrite the left-hand side as
\begin{align*}%\label{e:thm:SRD11}
\sum_{i=1}^m \zeta_i \sum_{j=1}^i A_T^{-1}& \left( X_1^*(Tt_{j}) - X_1^*(Tt_{j-1}) \right)\\
&= \sum_{i=1}^m (m-i+1) \zeta_i A_T^{-1} \left( X_1^*(Tt_{i}) - X_1^*(Tt_{i-1}) \right)
\end{align*}
and the same can be done for the right-hand side. Hence, it is enough to prove that for arbitrary $\zeta_1,\dots,\zeta_m \in \R$
\begin{equation}\label{e:thm:SRD11}
\sum_{i=1}^m \zeta_i A_T^{-1} \left( X_1^*(Tt_{i}) - X_1^*(Tt_{i-1}) \right) \overset{d}{\to} \sum_{i=1}^m \zeta_i \left( L_{\gamma} (t_i) - L_{\gamma} (t_{i-1}) \right).
\end{equation}
By using \eqref{supOU} we have that
\begin{equation}\label{e:thmSRD1:decomposition}
\begin{aligned}
&X_1^*(Tt_{i}) - X_1^*(Tt_{i-1}) = \int_{T t_{i-1}}^{T t_i} \int_0^\infty \int_{-\infty}^{\xi u} e^{-\xi u + s} \Lambda_1(d\xi,ds) du  \\
&=\int_0^\infty \int_{-\infty}^{\xi T t_{i-1}} \int_{T t_{i-1}}^{T t_i} e^{-\xi u + s} du \Lambda_1(d\xi,ds) +  \int_0^\infty \int_{\xi T t_{i-1}}^{\xi T t_i} \int_{s/\xi}^{T t_i} e^{-\xi u + s} du \Lambda_1(d\xi,ds)\\
&=: \Delta X^*_{1,1}(Tt_{i}) + \Delta X^*_{1,2}(Tt_{i})
\end{aligned}
\end{equation}
with $\Delta X^*_{1,1}(Tt_{i})$ and $\Delta X^*_{1,2}(Tt_{i})$ independent. Moreover, $\Delta X^*_{1,2}(Tt_{i})$, $i=1,\dots,m$ are independent, hence, to prove \eqref{e:thm:SRD11}, it will be enough to prove that
\begin{align}
A_T^{-1} \Delta X^*_{1,1}(Tt_{i}) &\overset{d}{\to} 0,\label{e:thm:SRD1toprove1}\\
A_T^{-1} \Delta X^*_{1,2}(Tt_{i}) &\overset{d}{\to}  L_{\gamma} (t_i) - L_{\gamma} (t_{i-1}).\label{e:thm:SRD1toprove2}
\end{align}
Due to stationary increments, it is enough to consider $t_i=t_1=t$ so that $t_{i-1}=0$.\\

We start with the proof of \eqref{e:thm:SRD1toprove1}. For any $\Lambda$-integrable function $f$ on $\R_+ \times \R$, one has (see \cite{rajput1989spectral})
\begin{equation}\label{integrationrule}
C\left\{ \zeta \ddagger \int_{\R_+ \times \R}f d\Lambda \right\} = \int_{\R_+ \times \R} \kappa_{L} (\zeta f(\xi,s)) ds \pi(d\xi).
\end{equation}
Using this and the change of variables we get that
\begin{align}
C\left\{ \zeta \ddagger A_T^{-1} \Delta X^*_{1,1}(Tt) \right\} &= \int_0^\infty \int_{-\infty}^{0} \kappa_{L_1} \left( \zeta A_T^{-1} \int_{0}^{Tt} e^{-\xi u + s} du \right) ds \pi(d\xi)\nonumber \\
&= \int_0^\infty \int_{-\infty}^{0} \kappa_{L_1} \left( \zeta A_T^{-1} e^s \xi^{-1} \left(1-e^{-\xi Tt} \right)\right) ds \pi(d\xi).\label{eq:proof:SRD1X1}
\end{align}
By \cite[Theorem 2.6.4]{ibragimov1971independent}, the assumption \eqref{regvarofL} implies that 
\begin{equation}\label{eq:proof:SRD1X1:ibragimov}
\kappa_{L_1} (\zeta) \sim  k(1/|\zeta|) \kappa_{\mathcal{S}_\gamma (\gamma^{1/\gamma} \sigma, \rho, 0)}(\zeta), \quad \text{ as } \zeta \to 0.
\end{equation}
Hence, for arbitrary $\delta>0$, in some neighborhood of the origin one has
\begin{equation*}
|\kappa_{L_1}(\zeta) | \leq C_1 |\zeta|^{\gamma-\delta}, \quad |\zeta| \leq \varepsilon.
\end{equation*}
On the other hand, since $\left| e^{i\zeta x}-1 \right| \leq 2$, we have from \eqref{kappaL1} that
\begin{equation*}
|\kappa_{L_1}(\zeta) | \leq |a| |\zeta| +  2 \int_{\R} \mathbf{1}_{\{|x|> 1\}}(x) \mu(dx) \leq |a| |\zeta| + C_2.
\end{equation*}
We can take $C_3$ large enough so that $|\kappa_{L_1}(\zeta) | \leq C_3 |\zeta|$ for $|\zeta| > \varepsilon$ and then
\begin{equation}\label{eq:proof:SRD1X1:bound}
|\kappa_{L_1}(\zeta) | \leq C_1 |\zeta|^{\gamma-\delta} \1_{\{|\zeta| \leq \varepsilon\}} (\zeta) + C_3 |\zeta| \1_{\{|\zeta| > \varepsilon\}} (\zeta).
\end{equation}
Now we have by using \eqref{eq:proof:SRD1X1}
\begin{align*}
&\left| C\left\{ \zeta \ddagger A_T^{-1} \Delta X^*_{1,1}(Tt) \right\} \right|\\
&\leq C_1 \int_0^\infty \int_{-\infty}^{0} \left|\zeta A_T^{-1}  e^s \xi^{-1} \left(1-e^{-\xi Tt} \right)\right|^{\gamma-\delta} \1_{\{|\zeta A_T^{-1}  e^s \xi^{-1} \left(1-e^{-\xi Tt} \right)| \leq \varepsilon\}} (\zeta) ds \pi(d\xi)\\
&\quad + C_3 \int_0^\infty \int_{-\infty}^{0} \left|\zeta A_T^{-1}  e^s \xi^{-1} \left(1-e^{-\xi Tt} \right)\right| \1_{\{|\zeta A_T^{-1}  e^s \xi^{-1} \left(1-e^{-\xi Tt} \right)| > \varepsilon\}} (\zeta) ds \pi(d\xi)\\
&\leq C_1 |\zeta|^{\gamma-\delta} A_T^{-\gamma+\delta} \int_0^\infty \int_{-\infty}^{0} e^{(\gamma-\delta)s} \left(\xi^{-1} \left(1-e^{-\xi Tt} \right)\right)^{\gamma-\delta} ds \pi(d\xi)\\
& \quad + C_3 |\zeta| t A_T^{-1} T \int_0^\infty \int_{-\infty}^{0} e^{s} (\xi Tt)^{-1} \left(1-e^{-\xi Tt} \right) \1_{\{|\zeta A_T^{-1} \xi^{-1} \left(1-e^{-\xi Tt} \right)| > \varepsilon\}} (\zeta) ds \pi(d\xi)\\
& \leq C_1 \frac{1}{\gamma - \delta} |\zeta|^{\gamma-\delta} t^{\gamma-\delta} A_T^{-\gamma+\delta} T^{\gamma-\delta}\int_0^\infty \left((\xi Tt)^{-1} \left(1-e^{-\xi Tt} \right)\right)^{\gamma-\delta}  \pi(d\xi)\\
& \quad + C_3 |\zeta| t A_T^{-1} T \int_0^\infty (\xi Tt)^{-1} \left(1-e^{-\xi Tt} \right) \1_{\{|\zeta A_T^{-1} \xi^{-1} \left(1-e^{-\xi Tt} \right)| > \varepsilon\}} (\zeta)  \pi(d\xi)\\
& \leq C_1 \frac{1}{\gamma - \delta} |\zeta|^{\gamma-\delta} t^{\gamma-\delta} T^{\gamma-\delta-1+\delta/\gamma} k^{\#}(T)^{(-\gamma+\delta)/\gamma} \int_0^\infty \left((\xi Tt)^{-1} \left(1-e^{-\xi Tt} \right)\right)^{\gamma-\delta}  \pi(d\xi)\\
& \quad + C_3 |\zeta| t T^{1-1/\gamma} k^{\#}(T)^{-1/\gamma} \int_0^\infty (\xi Tt)^{-1} \left(1-e^{-\xi Tt} \right) \1_{\{|\zeta A_T^{-1} \xi^{-1} \left(1-e^{-\xi Tt} \right)| > \varepsilon\}} (\zeta)  \pi(d\xi).
\end{align*}

Now if $\gamma \in (0,1)$, then by using the inequality $x^{-1}(1-e^{-x})\leq 1$, $x>0$, and the fact that $\pi$ is a probability measure we have
\begin{align*}
&\left| C\left\{ \zeta \ddagger A_T^{-1} \Delta X^*_{1,1}(Tt) \right\} \right|\\
&\quad \leq  C_1 \frac{1}{\gamma - \delta} |\zeta|^{\gamma-\delta} t^{\gamma-\delta} T^{\gamma-\delta-1+\delta/\gamma} k^{\#}(T)^{(-\gamma+\delta)/\gamma} +  C_3 |\zeta| t T^{1-1/\gamma} k^{\#}(T)^{-1/\gamma}  \to 0,
\end{align*}
as $T\to \infty$, since $\gamma-\delta - 1 + \delta/\gamma < 0$ and $1-1/\gamma<0$. 

If $\gamma \in (1,2)$, then from the inequality $x^{-1}(1-e^{-x})\leq x^{-a}$ valid for $x>0$ and $a \in [0,1]$, we get by taking $a=a_1:=-(1-\gamma)/(\gamma- \delta)\in (0,1)$ for the first term and $a=a_2:=\gamma/2-1/(2\gamma)\in (0,1)$ for the second term that
\begin{align*}
&\left| C\left\{ \zeta \ddagger A_T^{-1} \Delta X^*_{1,1}(Tt) \right\} \right|\\
&\qquad \leq C_1 \frac{1}{\gamma - \delta} |\zeta|^{\gamma-\delta} t^{\gamma-\delta} T^{\gamma-\delta-1+\delta/\gamma} k^{\#}(T)^{(-\gamma+\delta)/\gamma} \int_0^\infty (\xi Tt)^{-a_1(\gamma-\delta)} \pi(d\xi)\\
&\qquad \quad + C_3 |\zeta| t T^{1-1/\gamma} k^{\#}(T)^{-1/\gamma} \int_0^\infty (\xi Tt)^{-a_2} \pi(d\xi)\\
&\qquad \leq C_1 \frac{1}{\gamma - \delta} |\zeta|^{\gamma-\delta} t^{1-\delta} T^{\delta/\gamma -\delta} k^{\#}(T)^{(-\gamma+\delta)/\gamma} \int_0^\infty \xi^{1-\gamma} \pi(d\xi)\\
&\qquad \quad +  C_3 |\zeta| t^{1-a_{2}} T^{1-1/\gamma-a_2} k^{\#}(T)^{-1/\gamma}  \int_0^\infty \xi^{-a_2} \pi(d\xi).
\end{align*}
This tends to zero as $T\to \infty$ since $\delta/\gamma -\gamma<0$, $1-1/\gamma-a_2<0$ and $\int_0^\infty \xi^{-a_2} \pi(d\xi)<\infty$ due to $-a_2>1-\gamma$. 

If $\gamma=1$, then we can similarly take $a=a_1=\varepsilon/(\gamma-\delta) \in (0,1)$ for the first term and $a=a_2:=\varepsilon \in (0,1)$ for the second term to obtain
\begin{align*}
&\left| C\left\{ \zeta \ddagger A_T^{-1} \Delta X^*_{1,1}(Tt) \right\} \right|\\
&\qquad \leq C_1 \frac{1}{\gamma - \delta} |\zeta|^{\gamma-\delta} t^{1-\delta-\varepsilon} T^{-\varepsilon} k^{\#}(T)^{(-\gamma+\delta)/\gamma} \int_0^\infty \xi^{-\varepsilon} \pi(d\xi)\\
&\qquad \quad +  C_3 \frac{1}{2} |\zeta| t^{1-\varepsilon} T^{-\varepsilon} k^{\#}(T)^{-1/\gamma}  \int_0^\infty \xi^{-\varepsilon} \pi(d\xi) \to 0, \quad \text{ as } T\to \infty.
\end{align*}
This completes the proof of \eqref{e:thm:SRD1toprove1}.\\

To prove \eqref{e:thm:SRD1toprove2}, note that because of \eqref{eq:proof:SRD1X1:ibragimov} we can write
\begin{equation*}%\label{e:asymptoticscumfun}
\kappa_{L_1} (\zeta) = \overline{k}(\zeta) \kappa_{\mathcal{S}_\gamma (\gamma^{1/\gamma} \sigma, \rho, 0)}(\zeta),
\end{equation*}
with $\overline{k}$ slowly varying at zero such that $\overline{k}(\zeta)\sim k(1/\zeta)$ as $\zeta \to 0$. From \eqref{integrationrule} we have
\begin{align}
&C \left\{ \zeta \ddagger A_T^{-1} \Delta X^*_{1,2}(Tt) \right\}\nonumber \\
&= \int_0^\infty \int_{0}^{\xi T t} \kappa_{L_1} \left( \zeta A_T^{-1} \int_{s/\xi}^{Tt} e^{-\xi u + s} du \right) ds \pi(d\xi)\nonumber \\
&= \int_0^\infty \int_{0}^{\xi T t} \kappa_{L_1} \left( \zeta A_T^{-1} \xi^{-1} \left( 1 - e^{-\xi Tt+s} \right) \right) ds \pi(d\xi)\nonumber \\
&= \int_0^\infty \int_{0}^{t} \kappa_{L_1} \left( \zeta A_T^{-1} \xi^{-1} \left( 1 - e^{-\xi T (t-s)} \right) \right) \xi T ds \pi(d\xi)\nonumber \\
&= \int_0^\infty \int_{0}^{t} \overline{k}\left( \zeta A_T^{-1} \xi^{-1} \left( 1 - e^{-\xi T (t-s)} \right) \right)\nonumber\\
&\hspace{0.5cm} \times \kappa_{\mathcal{S}_\gamma (\gamma^{1/\gamma} \sigma, \rho, 0)}\left( \zeta A_T^{-1} \xi^{-1} \left( 1 - e^{-\xi T (t-s)} \right) \right) \xi T ds \pi(d\xi)\nonumber \\
&= \kappa_{\mathcal{S}_\gamma (\gamma^{1/\gamma} \sigma, \rho, 0)}\left( \zeta \right) \int_0^\infty \int_{0}^{t}  A_T^{-\gamma} \left(\xi^{-1} \left( 1 - e^{-\xi T (t-s)} \right) \right)^{\gamma}\nonumber\\
&\hspace{3cm} \times \overline{k}\left( \zeta A_T^{-1} \xi^{-1} \left( 1 - e^{-\xi T (t-s)} \right) \right)  \xi T ds \pi(d\xi)\nonumber \\
&= \kappa_{\mathcal{S}_\gamma (\gamma^{1/\gamma} \sigma, \rho, 0)}\left( \zeta \right)\nonumber \int_0^\infty \int_{0}^{t}  \xi^{1-\gamma} \left( 1 - e^{-\xi T (t-s)} \right)^{\gamma} \nonumber\\
&\hspace{3cm} \times \frac{\overline{k}\left( \left(T k^{\#}(T) \right)^{-1/\gamma} \zeta \xi^{-1} \left( 1 - e^{-\xi T (t-s)} \right) \right)}{k^{\#}(T)}  ds \pi(d\xi).\label{e:thmSRD1:proof2lastline}
\end{align}
By the definition of $k^{\#}$, one has \cite[Theorem 1.5.13]{bingham1989regular}
\begin{equation*}
\frac{k^{\#} (T)}{ \overline{k} \left( \left( T k^{\#} (T) \right)^{-1/\gamma} \right)} \sim \frac{k^{\#} (T)}{ k \left( \left( T k^{\#} (T) \right)^{1/\gamma} \right)}  \to 1, \quad \text{ as } T\to \infty,
\end{equation*}
and due to slow variation of $\overline{k}$, for any $\zeta\in \R$, $\xi>0$ and $s\in (0,t)$, as $T\to \infty$
\begin{equation}\label{e:thmSRD1:proof1}
\begin{aligned}
&\frac{k^{\#} (T)}{\overline{k}\left( \left(T k^{\#}(T) \right)^{-1/\gamma} \zeta \xi^{-1} \left( 1 - e^{-\xi T (t-s)} \right) \right)} =\\
&\qquad \frac{\overline{k} \left( \left( T k^{\#} (T) \right)^{-1/\gamma} \right) }{\overline{k}\left( \left(T k^{\#}(T) \right)^{-1/\gamma} \zeta \xi^{-1} \left( 1 - e^{-\xi T (t-s)} \right) \right)}  \frac{k^{\#} (T)}{ \overline{k} \left( \left( T k^{\#} (T) \right)^{-1/\gamma} \right)} \to 1.
\end{aligned}
\end{equation}
Hence, if the limit could be passed under the integral in \eqref{e:thmSRD1:proof2lastline}, we would get that
\begin{equation*}
C \left\{ \zeta \ddagger A_T^{-1} \Delta X^*_{1,2}(Tt) \right\} \to t \kappa_{\mathcal{S}_\gamma (\gamma^{1/\gamma} \sigma, \rho, 0)} \int_0^\infty \xi^{1-\gamma} \pi(d\xi), \quad \text{ as } T\to \infty,
\end{equation*}
which proves \eqref{e:thm:SRD1toprove2}. To justify taking the limit under the integral, note that by Potter's bounds \cite[Theorem 1.5.6]{bingham1989regular} we have from \eqref{e:thmSRD1:proof1} that for any $\delta>0$ 
\begin{align*}
&\frac{\overline{k}\left( \left(T k^{\#}(T) \right)^{-1/\gamma} \zeta \xi^{-1} \left( 1 - e^{-\xi T (t-s)} \right) \right)}{k^{\#}(T)}\\
&\qquad \leq C_5 \max \left\{ \zeta^\delta \xi^{-\delta} \left( 1 - e^{-\xi T (t-s)} \right)^\delta , \zeta^{-\delta} \xi^{\delta} \left( 1 - e^{-\xi T (t-s)} \right)^{-\delta}\right\}\\
&\qquad \leq C_6 \left( 1 - e^{-\xi T (t-s)} \right)^{-\delta} \max \left\{ \xi^{-\delta}, \xi^\delta \right\},
\end{align*}
for $T$ large enough. By taking $\delta<\min \{\gamma, \varepsilon\}$ we get
\begin{align*}
\xi^{1-\gamma} \left( 1 - e^{-\xi T (t-s)} \right)^{\gamma} &\frac{\overline{k}\left( \left(T k^{\#}(T) \right)^{-1/\gamma} \zeta \xi^{-1} \left( 1 - e^{-\xi T (t-s)} \right) \right)}{k^{\#}(T)}\\
&\leq  C_6 \xi^{1-\gamma} \left( 1 - e^{-\xi T (t-s)} \right)^{\gamma-\delta} \max \left\{ \xi^{-\delta}, \xi^\delta \right\}\\
&\leq  C_6 \xi^{1-\gamma} \max \left\{ \xi^{-\delta}, \xi^\delta \right\}
\end{align*}
and by the assumptions \eqref{thm:SRD1case:condition:gamale1} and \eqref{thm:SRD1case:condition:gamagr1}
\begin{align*}
&\int_0^\infty \int_{0}^{t}  \xi^{1-\gamma}  \max \left\{ \xi^{-\delta}, \xi^\delta \right\} ds \pi(d\xi)\\
&\hspace{4cm}= t \int_0^1  \xi^{1-\gamma-\delta} \pi(d\xi) + t\int_1^\infty \xi^{1-\gamma+\delta} \pi(d\xi) < \infty.
\end{align*}
Hence, the dominated convergence theorem can be applied in \eqref{e:thmSRD1:proof2lastline}.
\end{proof}

We next consider a scenario where \eqref{regvarofp} holds. If $\gamma \in (1,2)$, then this implies that \eqref{e:critcond} does not hold.

\begin{lemma}\label{lemma:X1case2}
Suppose that $\pi$ has a density $p$ satisfying \eqref{regvarofp} with $\alpha\in(0,1)$ and some slowly varying function $\ell$. If 
\begin{equation*}
1 + \alpha < \gamma,
\end{equation*}
then as $T\to \infty$
\begin{equation}\label{e:X1*limitLRD}
\left\{ \frac{1}{T^{1/(1+\alpha)} \ell^{\#}\left(T \right)^{1/(1+\alpha)}} X_1^*(Tt) \right\} \overset{fdd}{\to} \left\{L_{1 + \alpha} (t) \right\},
\end{equation}
where $\ell^{\#}$ is de Bruijn conjugate of $1/\ell\left(x^{1/(1+\alpha)}\right)$ and the limit $\{L_{1+\alpha}\}$ is $(1+\alpha)$-stable L\'evy process such that $L_{1+\alpha}(1)\overset{d}{=} \mathcal{S}_\gamma (\widetilde{\sigma}_{1,\alpha}, \widetilde{\rho}_1, 0)$ with
\begin{equation}\label{sigma1alpha}
\widetilde{\sigma}_{1,\alpha} = \left( \frac{\Gamma(1-\alpha)}{\alpha} (c^-_1+c^+_1)  \cos \left(\frac{\pi (1+\alpha)}{2}\right) \right)^{1/(1+\alpha)}, \qquad \widetilde{\rho_1} = \frac{c^-_1 - c^+_1}{c^-_1+c^+_1},
\end{equation}
and  $c^-_1, c^+_1$ given by
\begin{equation}\label{c-+1}
c^-_1 = \frac{\alpha}{1+\alpha} \int_{-\infty}^{-1} |y|^{1+\alpha} \mu(dy), \qquad c^+_1 = \frac{\alpha}{1+\alpha} \int_1^{\infty} y^{1+\alpha} \mu(dy).
\end{equation}
\end{lemma}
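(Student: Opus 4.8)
The strategy mirrors the proof of Lemma~\ref{lemma:X1case1}, with $A_T$ replaced by $B_T:=T^{1/(1+\alpha)}\ell^{\#}(T)^{1/(1+\alpha)}$, so that $B_T^{1+\alpha}=T\ell^{\#}(T)$. By the Cram\'er--Wold device and the telescoping identity, \eqref{e:X1*limitLRD} reduces to a one-increment statement, and I would use the decomposition \eqref{e:thmSRD1:decomposition} of $X_1^*(Tt_i)-X_1^*(Tt_{i-1})$ into the memory part $\Delta X^*_{1,1}(Tt_i)$ and the fresh part $\Delta X^*_{1,2}(Tt_i)$. Since $\Lambda_1$ is independently scattered and the fresh parts are built from disjoint $s$-strips, the $\Delta X^*_{1,2}(Tt_i)$ stay independent; hence it suffices to prove $B_T^{-1}\Delta X^*_{1,1}(Tt)\overset{d}{\to}0$ and $B_T^{-1}\Delta X^*_{1,2}(Tt)\overset{d}{\to}L_{1+\alpha}(t)$, which together yield a limit with independent increments.

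For the memory term I would exploit that $1+\alpha<\gamma$ forces $\gamma>1$, so $\mu$ has a finite first moment on $\{|x|>1\}$ and $|\kappa_{L_1}(\theta)|\le C|\theta|$. Inserting this into the integration rule \eqref{integrationrule} and using $\xi^{-1}(1-e^{-\xi Tt})\le Tt$ bounds the modulus of the cumulant of $B_T^{-1}\Delta X^*_{1,1}(Tt)$ by $C|\zeta|B_T^{-1}\int_0^\infty\xi^{-1}(1-e^{-\xi Tt})\,\pi(d\xi)$. By \eqref{regvarofp} and Karamata's theorem this integral is of order $(Tt)^{1-\alpha}\ell(Tt)$, so the whole bound is of order $T^{1-\alpha}\ell(Tt)/B_T=T^{-\alpha^2/(1+\alpha)}\,\ell(Tt)\,\ell^{\#}(T)^{-1/(1+\alpha)}$, which tends to $0$ because the exponent $-\alpha^2/(1+\alpha)$ is strictly negative and the slowly varying factors are harmless. (This is in fact cleaner than the corresponding step in Lemma~\ref{lemma:X1case1}.)

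The core is the fresh term. From \eqref{integrationrule},
\begin{equation*}
C\{\zeta\ddagger B_T^{-1}\Delta X^*_{1,2}(Tt)\}=\int_0^\infty\int_0^t\kappa_{L_1}\!\left(\zeta B_T^{-1}\xi^{-1}\bigl(1-e^{-\xi T(t-s)}\bigr)\right)\xi T\,ds\,\pi(d\xi),
\end{equation*}
and I would substitute $\xi=B_T^{-1}\eta$. Using $B_T^{1+\alpha}=T\ell^{\#}(T)$ and \eqref{regvarofp}, the factor $\xi T\,\pi(d\xi)$ becomes $\alpha\,\ell^{\#}(T)^{-1}\eta^{\alpha}\,\ell(B_T\eta^{-1})\,d\eta$, while the argument of $\kappa_{L_1}$ becomes $\zeta\eta^{-1}(1-e^{-\xi T(t-s)})$. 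For fixed $\eta>0$ and $s<t$ one has $\xi T=(B_T^{-1}T)\eta\to\infty$, so $1-e^{-\xi T(t-s)}\to1$, and the de Bruijn relation for $\ell^{\#}$ gives $\ell(B_T\eta^{-1})/\ell^{\#}(T)\to1$, exactly as in \eqref{e:thmSRD1:proof1}. The integrand therefore converges to $\alpha\eta^\alpha\kappa_{L_1}(\zeta\eta^{-1})$ and the $s$-integral produces the factor $t$, giving the candidate limit $t\,\alpha\int_0^\infty\eta^\alpha\kappa_{L_1}(\zeta\eta^{-1})\,d\eta$.

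The main obstacle is justifying this interchange by dominated convergence, since $\kappa_{L_1}$ must be controlled on two scales at once. For large $\eta$ (small argument) I would invoke the regular variation \eqref{eq:proof:SRD1X1:ibragimov}, giving $|\kappa_{L_1}(\theta)|\le C|\theta|^{\gamma-\delta}$ and an integrand $\lesssim\eta^{\alpha-\gamma+\delta}$ that is integrable at $+\infty$ \emph{precisely because} $\gamma>1+\alpha$; for small $\eta$ (large argument) the Lipschitz bound $|\kappa_{L_1}(\theta)|\le C|\theta|$ gives $\lesssim\eta^{\alpha-1}$, integrable at $0$ since $\alpha>0$. Potter's bounds \cite[Theorem 1.5.6]{bingham1989regular} control the slowly varying ratio uniformly in $T$ (absorbing factors $\eta^{\pm\delta}$ into the exponents), and $1-e^{-\xi T(t-s)}\le1$ removes the residual $T$-dependence. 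Finally, since $\gamma>1$ and $\E X(1)=0$ force the centering $a+\int_{|x|>1}x\,\mu(dx)=0$, one may write $\kappa_{L_1}(\theta)=\int_{|x|>1}(e^{i\theta x}-1-i\theta x)\,\mu(dx)$; substituting $u=\eta^{-1}$ and evaluating the classical integral $\int_0^\infty u^{-2-\alpha}(e^{iau}-1-iau)\,du=\Gamma(-1-\alpha)\,|a|^{1+\alpha}\bigl(\cos\tfrac{\pi(1+\alpha)}{2}-i\sin\tfrac{\pi(1+\alpha)}{2}\sign a\bigr)$ with $a=\zeta x$, then integrating against $\mu$ and using the definitions \eqref{c-+1}, identifies the candidate limit with $t\,\kappa_{\mathcal{S}_{1+\alpha}(\widetilde\sigma_{1,\alpha},\widetilde\rho_1,0)}(\zeta)$, the scale and skewness being those recorded in \eqref{sigma1alpha}. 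This proves $B_T^{-1}\Delta X^*_{1,2}(Tt)\overset{d}{\to}L_{1+\alpha}(t)$ and, together with the memory estimate and the independence of the fresh increments, completes the argument.
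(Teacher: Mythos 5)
Your proposal follows essentially the same route as the paper's proof: the same decomposition into the memory part $\Delta X^*_{1,1}$ and fresh part $\Delta X^*_{1,2}$, the same linear bound $|\kappa_{L_1}(\theta)|\le C|\theta|$ (available since $\gamma>1$) to kill the memory term at rate $T^{-\alpha^2/(1+\alpha)}$ up to slowly varying factors, the same change of variables producing the candidate limit $t\,\alpha\int_0^\infty\eta^{\alpha}\kappa_{L_1}(\zeta\eta^{-1})\,d\eta=t\,\zeta^{1+\alpha}\alpha\int_0^\infty\kappa_{L_1}(x)x^{-\alpha-2}\,dx$, and the same classical integral to identify it as a $(1+\alpha)$-stable cumulant. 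The one point where you genuinely depart is the dominated-convergence step: the paper splits the integral according to whether $1-e^{-\xi T(t-s)}$ exceeds $1/2$ and controls $\kappa_{L_1}$ through truncated second moments of $\mu_1$, whereas you dominate directly with $|\kappa_{L_1}(\theta)|\le C\min\{|\theta|,|\theta|^{\gamma-\delta}\}$ (linear globally, power $\gamma-\delta$ near the origin via \eqref{eq:proof:SRD1X1:ibragimov}); since both bounds are monotone in $|\theta|$ the factor $1-e^{-\xi T(t-s)}\in[0,1]$ is absorbed, integrability at $\infty$ is exactly $\gamma>1+\alpha+2\delta$, and integrability at $0$ is $\alpha>\delta$, so this is a legitimate and arguably cleaner alternative to the paper's $I^{(1)}_T,I^{(2)}_T$ split. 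One slip to fix: the classical integral should read $\int_0^\infty u^{-2-\alpha}\bigl(e^{iau}-1-iau\bigr)\,du=\Gamma(-1-\alpha)\,|a|^{1+\alpha}\bigl(\cos\tfrac{\pi(1+\alpha)}{2}+i\,\sign(a)\sin\tfrac{\pi(1+\alpha)}{2}\bigr)$; with the sign you wrote, the skewness would come out as $(c_1^+-c_1^-)/(c_1^++c_1^-)$, the negative of $\widetilde{\rho}_1$ in \eqref{sigma1alpha}.
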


\begin{proof}
The proof is similar to the proof of \cite[Theorem 2.2]{GLT2017Limit}. As in the proof of Lemma \ref{lemma:X1case1}, it will be enough to prove that as $T\to \infty$
\begin{align}
A_T^{-1} \Delta X^*_{1,1}(Tt) &\overset{d}{\to} 0,\label{e:thm:LRD1toprove1}\\
A_T^{-1} \Delta X^*_{1,2}(Tt) &\overset{d}{\to}  L_{1+\alpha} (t),\label{e:thm:LRD1toprove2}
\end{align}
with $A_T=T^{1/(1+\alpha)} \ell^{\#}\left(T \right)^{1/(1+\alpha)}$.  Note that the de Bruijn conjugate $\ell^{\#}$ exists by \cite[Theorem 1.5.13]{bingham1989regular} and satisfies
\begin{equation}\label{e:thmSPL2dBcon}
\frac{\ell^{\#} \left(T\right)}{\ell\left( \left( T\ell^{\#}\left(T \right) \right)^{1/(1+\alpha)} \right) } \sim 1, \text{  as } T\to \infty.
\end{equation}

To prove \eqref{e:thm:LRD1toprove1}, note that we can write $p(x)=\alpha \widetilde{\ell}(x^{-1}) x^{\alpha-1}$ where $\widetilde{\ell}(t) \sim \ell(t)$ as $t \to \infty$. Now from \eqref{eq:proof:SRD1X1} we have
\begin{align*}
C&\left\{ \zeta \ddagger A_T^{-1} \Delta X^*_{1,1}(Tt) \right\}\\
&= \int_0^\infty \int_{-\infty}^{0} \kappa_{L_1} \left( \zeta A_T^{-1} T e^s \xi^{-1} \left(1-e^{-\xi t} \right)\right) ds \pi(T^{-1} d\xi)\\
&= \int_0^\infty \int_{-\infty}^{0} \kappa_{L_1} \left( \zeta A_T^{-1} T e^s \xi^{-1} \left(1-e^{-\xi t} \right)\right) ds \pi(T^{-1} d\xi)\\
&= \int_0^\infty \int_{-\infty}^{0} \kappa_{L_1} \left( \zeta A_T^{-1} T e^s \xi^{-1} \left(1-e^{-\xi t} \right)\right)  \alpha \widetilde{\ell}(T \xi^{-1}) \xi^{\alpha-1} T^{-\alpha} ds d\xi.
\end{align*}
We have assumed $1+\alpha<\gamma$, hence $\gamma>1$ and from \eqref{eq:proof:SRD1X1:bound} we get the bound
\begin{equation}\label{e:LRDcaseboundlinear}
|\kappa_{L_1}(\zeta)| \leq C_1 |\zeta|,\quad \zeta \in \R.
\end{equation}
By using Potter's bounds \cite[Theorem 1.5.6]{bingham1989regular} we have for $0<\delta<\alpha^2/(1+\alpha)$
\begin{equation*}
\widetilde{\ell}(T \xi^{-1}) = \frac{\widetilde{\ell}(T \xi^{-1})}{\widetilde{\ell}( \xi^{-1})} \widetilde{\ell}(\xi^{-1}) \leq C_2 \max \left\{ T^{-\delta}, T^{\delta} \right\} \widetilde{\ell}(\xi^{-1}).
\end{equation*}
Now we get that
\begin{align*}
&\left| C\left\{ \zeta \ddagger A_T^{-1} \Delta X^*_{1,1}(Tt) \right\}  \right|\\
&\ \leq \alpha C_3 |\zeta| T^{-\alpha^2/(1+\alpha)+\delta}  \ell^{\#}\left(T \right)^{-1/(1+\alpha)} \int_0^\infty \int_{-\infty}^{0} e^{s} \xi^{-1} \left(1-e^{-\xi t} \right) \widetilde{\ell}(\xi^{-1}) \xi^{\alpha-1} ds d\xi \\
&\ \leq C_4  |\zeta| T^{-\alpha^2/(1+\alpha)+\delta}  \ell^{\#}\left(T \right)^{-1/(1+\alpha)} \int_0^\infty \widetilde{\ell}(\xi^{-1}) \xi^{\alpha-1} d\xi \to 0,
\end{align*}
as $T\to \infty$.\\

We now turn to \eqref{e:thm:LRD1toprove2}. As in the proof of Lemma \ref{lemma:X1case1} we have
\begin{align}
C &\left\{ \zeta \ddagger A_T^{-1} \Delta X^*_{1,2}(Tt) \right\}\nonumber\\
&= \int_0^\infty \int_{0}^{t} \kappa_{L_1} \left( \zeta A_T^{-1} \xi^{-1} \left( 1 - e^{-\xi T (t-s)} \right) \right) \xi T ds \pi(d\xi)\nonumber\\
&= \int_0^\infty \int_{0}^{t} \kappa_{L_1} \left( \zeta A_T^{-1} \xi^{-1} \left( 1 - e^{-\xi T (t-s)} \right) \right) \alpha \widetilde{\ell}(\xi^{-1}) \xi^{\alpha} T ds d\xi. \label{e:thmSPL22}
\end{align}
Suppose that $\zeta>0$. The proof is analogous if $\zeta<0$. Making change of variables $x=\zeta A_T^{-1} \xi^{-1}$ in \eqref{e:thmSPL22} we get
\begin{align}
&C \left\{ \zeta \ddagger A_T^{-1} \Delta X^*_{1,2}(Tt) \right\}\nonumber\\
&= \zeta^{1+\alpha} \int_0^\infty \int_{0}^{t} \kappa_{L_1} \left( x \left( 1 - e^{-x^{-1} \frac{\zeta T}{A_T} (t-s)} \right) \right)  A_T^{-(1+\alpha)} T \widetilde{\ell}\left(A_T x \zeta^{-1}\right) \alpha x^{-\alpha-2} ds dx\nonumber\\
&= \zeta^{1+\alpha} \int_0^\infty \int_{0}^{t} \kappa_{L_1} \left( x \left( 1 - e^{-x^{-1} \frac{\zeta T}{A_T} (t-s)} \right) \right)\nonumber\\
&\hspace{4cm} \times \frac{\widetilde{\ell}\left( T^{1/(1+\alpha)} \ell^{\#}\left(T \right)^{1/(1+\alpha)} x \zeta^{-1} \right) }{\ell^{\#} \left(T\right)} \alpha x^{-\alpha-2} ds dx, \label{e:44}
\end{align}
and $T/A_T\to \infty$ as $T\to \infty$ implies that
\begin{equation*}
\kappa_{L_1} \left( x \left( 1 - e^{-x^{-1} \frac{\zeta T}{A_T} (t-s)} \right) \right) \to \kappa_{L_1} ( x ).
\end{equation*}
Since $\ell$ is slowly varying, $\ell\sim \widetilde{\ell}$ and \eqref{e:thmSPL2dBcon} holds, we have
\begin{align*}
&\frac{\widetilde{\ell}\left( T^{1/(1+\alpha)} \ell^{\#}\left(T \right)^{1/(1+\alpha)} x \zeta^{-1} \right) }{\ell^{\#} \left(T\right)} \frac{\ell\left( T^{1/(1+\alpha)} \ell^{\#}\left(T \right)^{1/(1+\alpha)}\right) }{\ell\left( T^{1/(1+\alpha)} \ell^{\#}\left(T \right)^{1/(1+\alpha)}\right)}\\
& \hspace{5em} \sim \frac{\ell\left( \left( T\ell^{\#}\left(T \right) \right)^{1/(1+\alpha)} \right) }{\ell^{\#} \left(T\right)} \to 1,
\end{align*}
as $T \to \infty$. Hence, if the limit could be passed under the integral, we would get that
\begin{equation}\label{e:thm:SLP3}
C \left\{ \zeta \ddagger A_T^{-1} \Delta X^*_{1,2}(Tt) \right\} \to t \zeta^{1+\alpha} \int_0^\infty \kappa_{L_1} (x) \alpha x^{-\alpha-2} dx.
\end{equation}
Let us assume momentarily that \eqref{e:thm:SLP3} holds. Since $\gamma>1$, we have assumed that the mean is $0$, namely $\E X_1(1)=\E L_1(1)= a + \int_{|x|>1} x \mu(dx)=0$  and hence from \eqref{kappaL1} we can write $\kappa_{L_1}$  in the form
\begin{equation}\label{e:kappaL1alternativeform}
\kappa_{L_1} (\zeta) = \int_{|x|>1}\left( e^{i\zeta x}-1 - i\zeta x\right) \mu(dx) = \int_{\R}\left( e^{i\zeta x}-1 - i\zeta x\right) \mu_1(dx).
\end{equation}
By using the relation 
\begin{equation*}
\int_0^\infty \left(e^{ \mp iu} -1\pm iu\right)  u^{-\lambda-1} du = \exp \left\{  \mp \frac{1}{2} i \pi \lambda \right\} \frac{\Gamma(2-\lambda)}{\lambda (\lambda-1)}
\end{equation*}
valid for $1<\lambda<2$ (see e.g.~\cite[Theorem 2.2.2]{ibragimov1971independent}), we obtain by taking $\lambda=1+\alpha$ that
\begin{align*}
&\alpha \int_0^\infty \kappa_{L_1} (x)  x^{-\alpha-2} dx = \alpha \int_{-\infty}^\infty \int_0^{\infty} \left(e^{ixy} -1-ixy\right)  x^{-\alpha-2} dx \mu_1(dy)\\
&= \alpha \int_{0}^\infty \int_0^{\infty} \left(e^{iu} -1-iu\right)  u^{-\alpha-2} du y^{1+\alpha}\mu_1(dy)\\
&\hspace{6em}  + \alpha \int_{-\infty}^0 \int_0^{\infty} \left(e^{-iu} -1+iu\right)  u^{-\alpha-2} du (-y)^{1+\alpha}\mu_1(dy)\\
&= \frac{\alpha\Gamma (1-\alpha)}{(1+\alpha)\alpha}  \left( e^{i(1+\alpha) \pi /2} \int_{0}^\infty y^{1+\alpha}\mu_1(dy) +  e^{-i(1+\alpha) \pi /2} \int_{-\infty}^0 |y|^{1+\alpha}\mu_1(dy) \right)\\
&= \frac{\Gamma (1-\alpha)}{\alpha} \Bigg( \cos \left(\frac{\pi (1+\alpha)}{2}\right) \left( \frac{\alpha}{1+\alpha} \int_{-\infty}^{-1} |y|^{1+\alpha} \mu(dy)  +  \frac{\alpha}{1+\alpha} \int_{1}^\infty y^{1+\alpha} \mu(dy)\right)\\
&\hspace{3em} -  i \sin \left(\frac{\pi (1+\alpha)}{2}\right) \left( \frac{\alpha}{1+\alpha} \int_{-\infty}^{-1} |y|^{1+\alpha} \mu(dy) - \frac{\alpha}{1+\alpha} \int_{1}^\infty y^{1+\alpha} \mu(dy) \right)\Bigg) \\
&= - \frac{\Gamma (1-\alpha)}{-\alpha} \left( \cos \left(\frac{\pi (1+\alpha)}{2}\right) \left(c_1^- + c_1^+\right) - i \sin \left(\frac{\pi (1+\alpha)}{2}\right) \left( c_1^--c_1^+ \right) \right)\\
&= - \frac{\Gamma (1-\alpha)}{-\alpha} \left(c_1^- + c_1^+\right) \cos \left(\frac{\pi (1+\alpha)}{2}\right) \left( 1 - i \frac{ c_1^--c_1^+}{c_1^-+c_1^+} \tan \left(\frac{\pi (1+\alpha)}{2}\right) \right)\\
&= \kappa_{\mathcal{S}_\gamma (\widetilde{\sigma}_{1,\alpha}, \widetilde{\rho}_1, 0)},
\end{align*}
where $\widetilde{\sigma}_{1,\alpha}$ and $\widetilde{\rho}_1$ are given by \eqref{sigma1alpha} and $c_1^-$, $c_1^+$ by \eqref{c-+1}. In the last equality $\sign (\zeta) = 1$ since we suppose $\zeta>0$.

To complete the proof we need to justify taking the limit under the integral in \eqref{e:44}. We denote $g_T(\zeta, x,s)=e^{-x^{-1} \frac{\zeta T}{A_T} (t-s)}$ and split $C \left\{ \zeta \ddagger A_T^{-1} \Delta X^*_{1,2}(Tt) \right\}$ into two parts:
\begin{equation}\label{e:splitI}
C \left\{ \zeta \ddagger A_T^{-1} \Delta X^*_{1,2}(Tt) \right\} = I^{(1)}_{T} + I^{(2)}_{T},
\end{equation}
where
\begin{align}
&\begin{split}\label{e:splitI1}
I^{(1)}_{T} &= \zeta^{1+\alpha} \int_0^\infty \int_{0}^{t} \kappa_{L_!} \left( x \left( 1 - g_T(\zeta, x,s) \right) \right) \frac{\widetilde{\ell}\left( T^{1/(1+\alpha)} \ell^{\#}\left(T \right)^{1/(1+\alpha)} x \zeta^{-1} \right) }{\ell^{\#} \left(T\right)}\\
&\qquad \qquad \qquad \times \alpha x^{-\alpha-2} \1_{[0,1/2]}(g_T(\zeta, x,s)) ds dx,\\
\end{split}\\
&\begin{split}\label{e:splitI2}
I^{(2)}_{T} &= \zeta^{1+\alpha} \int_0^\infty \int_{0}^{t} \kappa_{L_1} \left( x \left( 1 - g_T(\zeta, x,s) \right) \right)  \frac{\widetilde{\ell}\left( T^{1/(1+\alpha)} \ell^{\#}\left(T \right)^{1/(1+\alpha)} x \zeta^{-1} \right) }{\ell^{\#} \left(T\right)} \\
&\qquad \qquad \qquad \times \alpha x^{-\alpha-2} \1_{[1/2,1]}(g_T(\zeta, x,s)) ds dx.
\end{split}
\end{align}
From Potter's bounds \cite[Theorem 1.5.6]{bingham1989regular}, for $0<\delta<\min \left\{ \gamma - 1 - \alpha, \alpha, 1-\alpha\right\}$ there is $C_1$ such that
\begin{equation*}
\frac{\widetilde{\ell}\left( T^{1/(1+\alpha)} \ell^{\#}\left(T \right)^{1/(1+\alpha)} x \zeta^{-1} \right) }{\ell\left( T^{1/(1+\alpha)} \ell^{\#}\left(T \right)^{1/(1+\alpha)}\right)} \leq C_1 \max \left\{ x^{-\delta} \zeta^{\delta}, x^{\delta} \zeta^{-\delta} \right\}.
\end{equation*}
Now from \eqref{e:thmSPL2dBcon} we have that for $T$ large enough
\begin{equation*}%\label{e:thmSPLPotterbounds}
\frac{\widetilde{\ell}\left( T^{1/(1+\alpha)} \ell^{\#}\left(T \right)^{1/(1+\alpha)} x \zeta^{-1} \right) }{\ell^{\#} \left(T\right)} \leq C_2 \max \left\{ x^{-\delta} \zeta^{\delta}, x^{\delta} \zeta^{-\delta} \right\},
\end{equation*}
and hence
\begin{align*}
\left|I^{(1)}_{T}\right| &\leq C_3 \int_0^\infty \int_{0}^{t} \left| \kappa_{L_1} \left( x \left( 1 - g_T(\zeta, x,s) \right) \right) \right| \max \left\{ x^{-\delta}, x^{\delta} \right\}\\
&\hspace{4cm} \times \alpha x^{-\alpha-2} \1_{[0,1/2]}(g_T(\zeta, x,s)) ds dx,\\
\left|I^{(2)}_{T}\right| &\leq C_4 \int_0^\infty \int_{0}^{t} \left| \kappa_{L_1} \left( x \left( 1 - g_T(\zeta, x,s) \right) \right) \right| \max \left\{ x^{-\delta}, x^{\delta} \right\} \\
&\hspace{4cm} \times \alpha x^{-\alpha-2} \1_{[1/2,1]}(g_T(\zeta, x,s)) ds dx.
\end{align*}

We will first show that the dominated convergence theorem may be applied to $I^{(1)}_{T}$ showing that $I^{(1)}_{T}$ converges to the limit in \eqref{e:thm:SLP3}. From \eqref{e:kappaL1alternativeform} by using the inequality 
\begin{equation*}
\left|e^{ix}-\sum_{k=0}^{n} \frac{(ix)^k}{k!} \right| \leq \min \left\{ \frac{|x|^{n+1}}{(n+1)!}, \frac{2 |x|^n}{n!} \right\},
\end{equation*}
we get that for any $x \in \R$,
\begin{equation*}%\label{e:thmSLPboundonkappa}
|\kappa_{L_1}(x)| \leq \int_{\R} \left| e^{ixy}-1-ixy\right| \mu_1(dy) \leq \int_{|xy|\leq 1} |x y|^2 \mu_1(dy) + 2 \int_{|xy|> 1} |x y| \mu_1(dy).
\end{equation*}
Moreover, we have
\begin{equation*}
\sup_{1/2 \leq c \leq 1} \kappa_{L_1}(c x) \leq x^2 \int_{|y|\leq 2/|x|} y^2 \mu_1(dy) +  2 \int_{|xy|> 1} |x y| \mu_1(dy). =: K^{(1)}(x) + K^{(2)}(x),
\end{equation*}
and hence
\begin{equation*}
\left| \kappa_{L_1} \left( x \left( 1 - g_T(\zeta, x,s) \right) \right)  \1_{[0,1/2]}(g_T(\zeta, x,s)) \right| \leq K^{(1)}(x) + K^{(2)}(x).
\end{equation*}
Now
\begin{equation*}
\left|I^{(1)}_{T}\right| \leq C_3 \int_0^\infty \int_{0}^{t}\left( K^{(1)}(x) + K^{(2)}(x) \right) \max \left\{ x^{-\delta}, x^{\delta} \right\}\alpha x^{-\alpha-2} ds dx,
\end{equation*}
and it remains to show this integral is finite. Indeed, we have
\begin{align*}
&\int_0^\infty \int_{0}^{t}  K^{(1)}(x) \max \left\{ x^{-\delta}, x^{\delta} \right\}\alpha x^{-\alpha-2} ds dx\\
&= \alpha t \int_0^1 \int_{|y|\leq 2/|x|} y^2 \mu_1(dy) x^{-\alpha-\delta} dx  + \alpha t \int_1^\infty \int_{|y|\leq 2/|x|} y^2 \mu_1(dy) x^{-\alpha+\delta} dx\\
&= 2^{1-\alpha-\delta} \alpha t \int_0^1 \int_{|y|\leq 1/|x|} y^2 \mu_1(dy) x^{-\alpha-\delta} dx\\
&\hspace{3cm} + 2^{1-\alpha+\delta} \alpha t \int_1^\infty \int_{|y|\leq 1/|x|} y^2 \mu_1(dy) x^{-\alpha+\delta} dx\\
&= 2^{1-\alpha-\delta} \alpha t \int_{|y|\leq 1} y^2 \mu_1(dy) \int_0^1 x^{-\alpha-\delta} dx + 2^{1-\alpha+\delta} \alpha t \int_{|y|>1} y^2 \mu_1(dy) \int_0^{1/|y|} x^{-\alpha-\delta} dx\\
&\hspace{6em} + 2^{1-\alpha-\delta} \alpha t \int_{|y|\leq 1} y^2 \mu_1(dy) \int_1^{1/|y|} x^{-\alpha+\delta} dx\\
&= \frac{2^{1-\alpha+\delta}\alpha t }{1-\alpha-\delta} \int_{|y|>1} |y|^{1+\alpha+\delta} \mu_1(dy) < \infty
\end{align*}
and
\begin{align*}
&\int_0^\infty \int_{0}^{t}  K^{(2)}(x) \max \left\{ x^{-\delta}, x^{\delta} \right\}\alpha x^{-\alpha-2} ds dx\\
&= 2 \alpha t \int_0^1 \int_{|y|> 1/|x|} |y| \mu_1(dy) x^{-\alpha-1-\delta} dx  + 2 \alpha t \int_1^\infty \int_{|y|> 1/|x|} |y| \mu_1(dy) x^{-\alpha-1+\delta} dx\\
&= 2 \alpha t \int_{|y|> 1} |y| \mu_1(dy) \int_{1/|y|}^1 x^{-\alpha-1-\delta} dx  + 2 \alpha t \int_{|y|> 1} |y| \mu_1(dy) \int_1^\infty x^{-\alpha-1+\delta} dx\\
&\hspace{6em} + 2 \alpha t \int_{|y|\leq 1} |y| \mu_1(dy) \int_{1/|y|}^\infty x^{-\alpha-1+\delta} dx\\
&= \frac{2 \alpha t }{-\alpha-\delta}\int_{|y|> 1} |y| \left(1 - |y|^{\alpha+\delta} \right) \mu_1(dy) - \frac{2 \alpha t}{-\alpha-\delta} \int_{|y|> 1} |y| \mu_1(dy)\\
&= \frac{2 \alpha t }{\alpha+\delta}\int_{|y|> 1}|y|^{1+\alpha+\delta} \mu_1(dy)<\infty
\end{align*}
since $1+\alpha+\delta<\gamma$ and $\E |L(1)^{1+\alpha+\delta}|<\infty \Leftrightarrow \int_{|y|>1} |y|^{1+\alpha+\delta} \mu_1(dy) < \infty$.\\

We next show that $I^{(2)}_{T} \to 0$ in \eqref{e:splitI2} as $T\to \infty$. Since $\1_{[1/2,1]}(g_T(\zeta, x,s)) = \1_{\left[ \frac{\zeta  (t-s) T}{A_T \log 2 }, \infty \right)}(x)$, we have by using \eqref{e:LRDcaseboundlinear}
\begin{align*}
&\left| I_{T,2} \right| \leq C_5 \int_0^\infty \int_{0}^{t} x^{-\alpha-1}  \max \left\{ x^{-\delta} , x^{\delta} \right\} \1_{\left[ \frac{\zeta  (t-s) T}{A_T \log 2}, \infty \right)}(x) ds dx\\
&= C_5 \int_0^1 \int_{0}^{t} x^{-\alpha-1-\delta} \1_{\left[ \frac{\zeta u T}{A_T \log 2}, \infty \right)}(x)  dx du  + C_5 \int_1^\infty \int_{0}^{t} x^{-\alpha-1+\delta} \1_{\left[ \frac{\zeta u T}{A_T \log 2}, \infty \right)}(x)  dx du\\
&= C_5 \int_{0}^{t} \1_{\left(0, \frac{A_T \log 2}{\zeta T} \right]}(u) \int_{\frac{\zeta u T}{A_T \log 2}}^1 x^{-\alpha-1-\delta} dx du   + C_5 \int_{0}^{t} \1_{\left(0, \frac{A_T \log 2}{\zeta T} \right]}(u) \int_1^\infty  x^{-\alpha-1+\delta}  dx du\\
&\hspace{6em}  + C_5 \int_{0}^{t} \1_{\left[\frac{A_T  \log 2}{\zeta T}, \infty \right)}(u) \int_{\frac{\zeta u T}{A_T \log 2}}^\infty x^{-\alpha-1+\delta} dx du\\
&= C_6 \int_{0}^{t} \1_{\left(0, \frac{A_T \log 2}{\zeta T} \right]}(u) du - C_7 \left(\frac{T}{A_T}\right)^{-\alpha-\delta} \int_{0}^{t} u^{-\alpha-\delta} \1_{\left(0, \frac{A_T \log 2}{\zeta T} \right]}(u) du\\
&\hspace{6em} + C_8 \left(\frac{T}{A_T}\right)^{-\alpha+\delta} \int_0^t u^{-\alpha+\delta} \1_{\left[\frac{A_T  \log 2}{\zeta T}, \infty \right)}(u) du  \to 0,
\end{align*}
as $T\to \infty$, which completes the proof of \eqref{e:thm:LRD1toprove2}.
\end{proof}

To summarize the results of this subsection, let us assume that \eqref{pifinitemean} (hence \eqref{thm:SRD1case:condition:gamale1} holds) and that $\pi$ has a density $p$ satisfying \eqref{regvarofp} with $\alpha>0$ and some slowly varying function $\ell$. Then the limiting behavior is illustrated in Figure \ref{fig:limits:X1}.

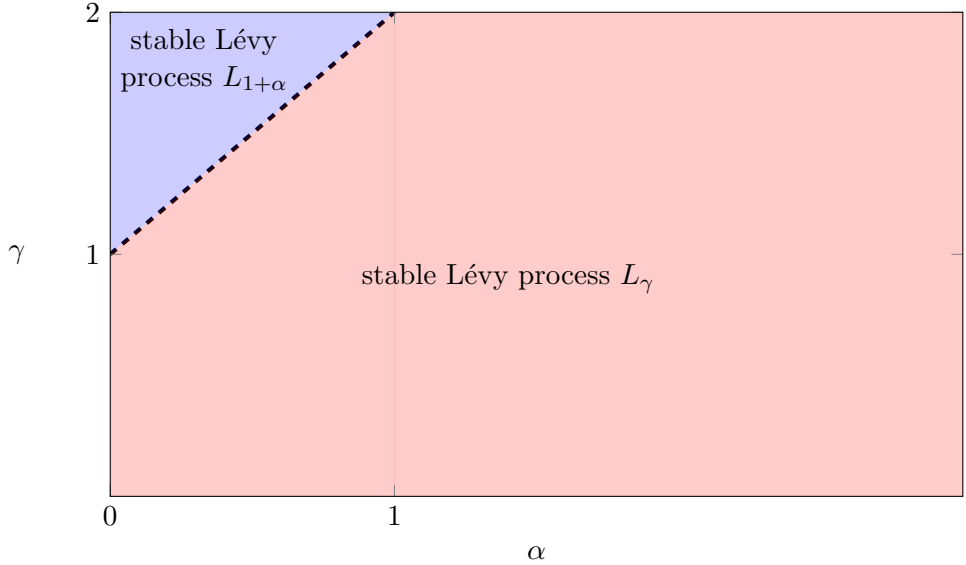
\begin{figure}[h!]
\centering
\begin{tikzpicture}
\begin{axis}[
width=0.8\textwidth,
height=0.5\textwidth,
xmin=0, xmax=3, ymin=0, ymax=2,
xlabel=$\alpha$, %xlabel style={at=(current axis.right of origin), anchor=west},
ylabel=$\gamma$, ylabel style={rotate=-90}, %ylabel style={at=(current axis.above origin), anchor=south},
xtick={0,1},
xticklabels={$0$,$1$},
ytick={1,2},
axis on top
]
\addplot[dashed, ultra thick] coordinates {(0,1) (1,2)};
\fill[red, opacity=0.2] (axis cs:1,0) -- (axis cs:3,0) -- (axis cs:3,2) -- (axis cs:1,2);
\fill[red, opacity=0.2] (axis cs:0,0) -- (axis cs:1,0) -- (axis cs:1,2) -- (axis cs:0,1);
\node[align=center, text width=0.4\textwidth] at (axis cs:1.4,0.9) {stable L\'evy process $L_{\gamma}$};
\fill[blue, opacity=0.2] (axis cs:0,1) -- (axis cs:1,2) -- (axis cs:0,2);
\node[align=center, text width=0.15\textwidth] at (axis cs:0.33,1.8) {stable L\'evy process $L_{1+\alpha}$};
\end{axis}
\end{tikzpicture}	
\caption{Classification of limits of $X_1^*$}
\label{fig:limits:X1}
\end{figure}

\subsection{The process $X_2^*$}
The background driving L\'evy process of $X_2$ consists only of jumps of magnitude less than or equal to one. The limiting behavior of $X_2^*$ may depend on the growth of the L\'evy measure near the origin. 

Note that $\E |X_2(t)|^q<\infty$ for any $q>0$. In particular, the variance is finite and $\E X_2(t)=0$. Hence, we obtain the following results as a corollary of \cite[Theorems 2.4, 2.2 and 2.3 respectively]{GLT2017Limit}. %Note that the value $\alpha=1$ acts as a boundary between Gaussian and infinite variance stable limit.

\begin{lemma}\label{lemma:X2SRD}
If 
\begin{equation*}
\int_0^\infty \xi^{-1} \pi(d \xi)< \infty,
\end{equation*}
then as $T\to \infty$
\begin{equation*}
\left\{ \frac{1}{T^{1/2}} X_2^*(Tt) \right\} \overset{fdd}{\to} \left\{\widetilde{\sigma}_2 B(t) \right\},
\end{equation*}
where $\{ B(t)\}$ is standard Brownian motion and
\begin{equation*}
\widetilde{\sigma}_2^2= 2 \sigma_2^2 \int_0^\infty \xi^{-1} \pi(d \xi), \ \text{ with } \ \sigma_2^2=\Var X_2(1)=\frac{1}{2} \int_{|x|\leq 1} x^2 \mu_2(dx).
\end{equation*}
\end{lemma}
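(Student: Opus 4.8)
The plan is to recognize $X_2$ as itself a supOU process of the form \eqref{supOU}, now driven by the L\'evy basis $\Lambda_2$ with characteristic quadruple $(0,0,\mu_2,\pi)$, and then to invoke the finite-variance, short-range dependence limit theorem \cite[Theorem 2.4]{GLT2017Limit} directly. The essential point is that, because $\mu_2$ is supported on $[-1,1]$, the background driving process $L_2$ has moments of every order, so $X_2(t)$ has finite variance (indeed finite moments of all orders). Differentiating the cumulant function of $L_2(1)$ in \eqref{kappaL1} at $\zeta=0$ gives $\E L_2(1)=0$, and by the integration rule \eqref{integrationrule} this propagates to $\E X_2(t)=0$. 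Thus $X_2$ is a centred, finite-variance supOU process, exactly the setting to which the finite-variance machinery applies.

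First I would record the correlation structure. Since the correlation function of a supOU process depends only on $\pi$, formula \eqref{corrsupOU} gives $r(\tau)=\int_{\R_+}e^{-\tau\xi}\pi(d\xi)$ for $X_2$ as well, and by Tonelli $\int_0^\infty r(\tau)\,d\tau=\int_0^\infty \xi^{-1}\pi(d\xi)$. Hence the hypothesis $\int_0^\infty \xi^{-1}\pi(d\xi)<\infty$ is precisely the condition \eqref{e:critcondFV} that the correlation function be integrable, i.e. that $X_2$ be short-range dependent. This is the regime covered by \cite[Theorem 2.4]{GLT2017Limit}, which yields $T^{-1/2}X_2^*(Tt)\overset{fdd}{\to}\widetilde{\sigma}_2 B(t)$ with $\widetilde{\sigma}_2^2=2\sigma_2^2\int_0^\infty\xi^{-1}\pi(d\xi)$ and $\sigma_2^2=\Var X_2(1)$.

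It then remains to identify $\sigma_2^2$ explicitly. Taking the second cumulant in \eqref{integrationrule} and using $\int_{-\infty}^{\xi}e^{-2\xi+2s}\,ds=\tfrac12$ together with the fact that $\pi$ is a probability measure, one finds $\Var X_2(1)=\tfrac12\Var L_2(1)=\tfrac12\int_{|x|\leq1}x^2\mu_2(dx)$, which is exactly the stated constant $\sigma_2^2$.

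I expect the only real content to be the verification that $X_2$ meets the hypotheses of the cited finite-variance theorem — chiefly that it is centred with finite variance and that the stated integrability condition coincides with integrability of the correlation function — together with the bookkeeping that produces $\widetilde{\sigma}_2^2$; there is no genuine analytic obstacle, since the central limit theorem for integrated short-range dependent finite-variance supOU processes is already established in \cite{GLT2017Limit}. If a self-contained argument were preferred, the alternative would be to repeat the characteristic-function scheme of Lemma \ref{lemma:X1case1}: reduce to increments by Cram\'er-Wold and stationarity, split $\Delta X_2^*$ into pieces as in \eqref{e:thmSRD1:decomposition}, show the boundary piece is asymptotically negligible, and verify a Lindeberg-type condition for the remaining triangular array so that its limit is Gaussian with the variance above — and there the finiteness of $\int_0^\infty\xi^{-1}\pi(d\xi)$ is exactly what makes the dominating integral converge.
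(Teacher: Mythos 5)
Your proposal is correct and matches the paper's treatment: the paper likewise observes that $X_2$ is a centred supOU process with all moments finite (since $\mu_2$ is supported on $[-1,1]$) and obtains the lemma directly as a corollary of the finite-variance short-range-dependence result \cite[Theorem 2.4]{GLT2017Limit}, the hypothesis $\int_0^\infty \xi^{-1}\pi(d\xi)<\infty$ being exactly the integrability condition \eqref{e:critcondFV} on the correlation function. Your additional verification of $\sigma_2^2=\Var X_2(1)=\tfrac12\int_{|x|\leq 1}x^2\mu_2(dx)$ is sound bookkeeping that the paper leaves implicit.
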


\begin{lemma}\label{lemma:X2LRD}
Suppose that $\pi$ has a density $p$ satisfying \eqref{regvarofp} with $\alpha\in(0,1)$ and some slowly varying function $\ell$ and suppose \eqref{LevyMCond} holds with $0\leq \beta<2$. 

\begin{enumerate}[(i)]
\item If 
\begin{equation*}
\beta<1+\alpha,
\end{equation*}
then as $T\to \infty$
\begin{equation*}
\left\{ \frac{1}{T^{1/(1+\alpha)} \ell^{\#}\left(T \right)^{1/(1+\alpha)}} X_2^*(Tt) \right\} \overset{fdd}{\to} \left\{L_{1 + \alpha} (t) \right\},
\end{equation*}
where $\ell^{\#}$ is de Bruijn conjugate of $1/\ell\left(x^{1/(1+\alpha)}\right)$ and  $\{L_{1+\alpha}\}$ is $(1+\alpha)$-stable L\'evy process such that $L_{1+\alpha}(1)\overset{d}{=} \mathcal{S}_{1+\alpha} (\widetilde{\sigma}_{2,\alpha}, \widetilde{\rho}_{2,\alpha}, 0)$ with 
\begin{equation*}
\widetilde{\sigma}_{2,\alpha} = \left( \frac{\Gamma(1-\alpha)}{\alpha} (c^-_2+c^+_2)  \cos \left(\frac{\pi (1+\alpha)}{2}\right) \right)^{1/(1+\alpha)}, \quad \widetilde{\rho}_{2,\alpha} = \frac{c^-_2 - c^+_2}{c^-_2+c^+_2},
\end{equation*}
and  $c^-_2, c^+_2$ given by
\begin{equation*}%\label{c+-alpha}
c^-_2 = \frac{\alpha}{1+\alpha} \int_{-1}^0 |y|^{1+\alpha} \mu(dy), \qquad c^+_2 = \frac{\alpha}{1+\alpha} \int_0^1 y^{1+\alpha} \mu(dy).
\end{equation*}

\item If 
\begin{equation*}
1+\alpha<\beta<2,
\end{equation*}
then as $T\to \infty$
\begin{equation*}%\label{e:X2*limitLRD}
\left\{ \frac{1}{T^{1-\alpha/\beta} \ell(T)^{1/\beta}} X_2^*(Tt) \right\} \overset{fdd}{\to} \left\{Z_{\alpha, \beta} (t) \right\},
\end{equation*}
where the limit $\{Z_{\alpha, \beta}\}$ is a process defined as in Theorem \ref{thm:mainb=0}(I).
%where $\{Z_{\alpha, \beta}\}$ is a process with the stochastic integral representation
%\begin{equation}\label{Zalphabeta}
%Z_{\alpha, \beta}(t) = \int_{\R_+} \int_{\R} \left( \mathfrak{f}(\xi, t-s) - \mathfrak{f}(\xi, -s) \right) K(d\xi, ds),
%\end{equation}
%$\mathfrak{f}$ is given by
%\begin{equation}\label{mff}
%\mff(x,u) = \begin{cases}
%1-e^{-xu}, & \text{ if } x>0 \text{ and } u>0,\\
%0, & \text{ otherwise},
%\end{cases}
%\end{equation}
%and $K$ is a $\beta$-stable L\'evy basis on $\R_+ \times \R$ with control measure $\alpha \xi^{\alpha} d\xi ds$ such that $C\left\{ \zeta \ddagger K(A)\right\} = \kappa_{\mathcal{S}_{\beta} (\widetilde{\sigma}_{2,\beta}, \widetilde{\rho}_{2,\beta}, 0)}(\zeta)$ with 
%\begin{equation*}
%\widetilde{\sigma}_{2,\beta} = \left( \frac{\Gamma(2-\beta)}{1-\beta} (c^-+c^+)  \cos \left(\frac{\pi \beta}{2}\right) \right)^{1/\beta}, \qquad \widetilde{\rho}_{2,\beta} = \frac{c^- - c^+}{c^-+c^+},
%\end{equation*}
%and  $c^-, c^+$ as in \eqref{LevyMCond}.
\end{enumerate}
\end{lemma}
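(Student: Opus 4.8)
The plan is to derive both parts as corollaries of the finite-variance limit theory developed in \cite{GLT2017Limit}, exploiting the fact that $X_2$ is itself a finite-variance supOU process. The starting point is the observation recorded just above the lemma: since the background driving L\'evy process $L_2$ has only jumps of magnitude at most one, $L_2(1)$ has finite moments of all orders, whence $\E|X_2(t)|^q<\infty$ for every $q>0$; in particular $\Var X_2(t)<\infty$ and, by the compensation built into $\kappa_{L_2}$, $\E X_2(t)=0$. Consequently $X_2$ falls squarely within the scope of \cite{GLT2017Limit}, where the limits of integrated finite-variance supOU processes are classified through the dependence index $\alpha$ of \eqref{regvarofp} and the origin-growth index $\beta$ of the L\'evy measure.

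First I would confirm that the two structural inputs required by \cite{GLT2017Limit} survive the passage from $X$ to $X_2$. The probability measure attached to $X_2$ is the very same $\pi$, so hypothesis \eqref{regvarofp} with $\alpha\in(0,1)$ transfers verbatim and, through the correlation formula \eqref{corrsupOU}, places us in the long-range dependent regime. On the jump side, $\mu_2=\mu\,\1_{|x|\le1}$ agrees with $\mu$ in a neighbourhood of the origin, so $M_2^{\pm}(x)\sim c^{\pm}x^{-\beta}$ as $x\to0$ with the very same exponent $\beta$ and constants $c^{\pm}$ as in \eqref{LevyMCond}; the Blumenthal--Getoor index is likewise unchanged. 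Thus the dichotomy of $\beta$ against $1+\alpha$ in the statement is precisely the threshold appearing in the finite-variance classification.

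Given these, the two parts follow by invoking the corresponding theorems. For part (i), where $\beta<1+\alpha$, \cite[Theorem 2.2]{GLT2017Limit} delivers a $(1+\alpha)$-stable L\'evy limit; identifying the scale and skewness constants produced there against the truncated measure $\mu_2$ yields $\widetilde{\sigma}_{2,\alpha}$ and $\widetilde{\rho}_{2,\alpha}$ in terms of the constants $c_2^{\pm}$ of the statement. This constant computation is the exact finite-variance analogue of the one carried out for $X_1^*$ in the proof of Lemma \ref{lemma:X1case2}, with the integrals now taken over $|y|\le1$ rather than $|y|>1$. For part (ii), where $1+\alpha<\beta<2$, \cite[Theorem 2.3]{GLT2017Limit} produces the self-similar stable process $Z_{\alpha,\beta}$ of \eqref{Zalphabeta}, its driving $\beta$-stable basis $K$ carrying control measure $\alpha\xi^{\alpha}\,d\xi\,ds$ and scale/skewness parameters $\widetilde{\sigma}_{2,\beta},\widetilde{\rho}_{2,\beta}$ determined by $c^{\pm}$ and $\beta$ through \eqref{sigmaandrho} with $\gamma$ replaced by $\beta$.

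The only genuinely substantive point is the legitimacy of applying \cite{GLT2017Limit} to the truncated process, namely that discarding the large jumps leaves the relevant asymptotics intact. This is immediate once one notes that the correlation decay is controlled solely by $\pi$ near zero via \eqref{corrsupOU}, and that the origin behaviour of the L\'evy measure --- the only other quantity entering the finite-variance theorems --- is untouched by removing the jumps of size exceeding one. Everything else is bookkeeping of constants, routine and already templated by the $X_1^*$ computation.
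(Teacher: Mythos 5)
Your proposal is correct and follows essentially the same route as the paper: the authors likewise observe that $X_2$ has all moments finite with $\E X_2(t)=0$, note that $\pi$ and the origin behaviour of $\mu_2=\mu\,\1_{|x|\le 1}$ coincide with those of the original quadruple, and obtain both parts directly as corollaries of the finite-variance classification in \cite{GLT2017Limit} (their Theorems 2.2 and 2.3), with the constants $c_2^{\pm}$ read off from the truncated measure exactly as you describe.
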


Assuming that \eqref{regvarofp} and \eqref{LevyMCond} hold, we can summarize the limiting behavior of $X_2^*$ in Figure \ref{fig:limits:X2}. The value $\alpha=1$ is a boundary between Gaussian and infinite variance stable limit.


\begin{figure}[h!]
\centering
\begin{tikzpicture}
\begin{axis}[
width=0.8\textwidth,
height=0.5\textwidth,
xmin=0, xmax=3, ymin=0, ymax=2,
xlabel=$\alpha$, %xlabel style={at=(current axis.right of origin), anchor=west},
ylabel=$\beta$, ylabel style={rotate=-90}, %ylabel style={at=(current axis.above origin), anchor=south},
xtick={0,1},
xticklabels={$0$,$1$},
ytick={1,2},
axis on top
]
\addplot[dashed, ultra thick] coordinates {(1,0) (1,2)};
\addplot[dashed, ultra thick] coordinates {(0,1) (1,2)};
\fill[black!30!violet, opacity=0.2] (axis cs:1,0) -- (axis cs:3,0) -- (axis cs:3,2) -- (axis cs:1,2);
\node[] at (axis cs:2,1) {Brownian motion};
\fill[blue, opacity=0.2] (axis cs:0,0) -- (axis cs:1,0) -- (axis cs:1,2) -- (axis cs:0,1);
\node[align=center, text width=0.15\textwidth] at (axis cs:0.5,0.7) {stable L\'evy process $L_{1+\alpha}$};
\fill[green, opacity=0.2] (axis cs:0,1) -- (axis cs:1,2) -- (axis cs:0,2);
\node[align=center, text width=0.15\textwidth] at (axis cs:0.33,1.75) {stable process $Z_{\alpha,\beta}$ \eqref{Zalphabeta}};
\end{axis}
\end{tikzpicture}	
\caption{Classification of limits of $X_2^*$}
\label{fig:limits:X2}
\end{figure}

\subsection{The process $X_3^*$}
Since $X_3^*$ is a Gaussian process, the limiting behavior is simple (see \cite[Theorems 2.1 and 2.4]{GLT2017Limit}).

\begin{lemma}\label{lemma:gaussiancase}
\begin{enumerate}[(i)]
\item If 
\begin{equation*}
\int_0^\infty \xi^{-1} \pi(d \xi)< \infty,
\end{equation*}
then as $T\to \infty$
\begin{equation*}
\left\{ \frac{1}{T^{1/2}} X_3^*(Tt) \right\} \overset{fdd}{\to} \left\{\widetilde{\sigma}_3 B(t) \right\},
\end{equation*}
where $\{ B(t)\}$ is standard Brownian motion and $\widetilde{\sigma}_3^2= 2 \sigma_3^2 \int_0^\infty \xi^{-1} \pi(d \xi)$ with $\sigma_3^2=\Var X_3(1)=b/2$.

\item Suppose that $\pi$ has a density $p$ satisfying \eqref{regvarofp} with $\alpha\in(0,1)$ and some slowly varying function $\ell$. Then as $T\to \infty$
\begin{equation*}
\left\{ \frac{1}{T^{1-\alpha/2 } \ell(T)^{1/2}} X_3^*(Tt) \right\} \overset{fdd}{\to} \left\{\widetilde{\sigma}_{3,\alpha} B_H(t) \right\},
\end{equation*}
where $\{ B_H(t)\}$ is standard fractional Brownian motion with $H=1-\alpha/2$ and $\widetilde{\sigma}_{3,\alpha} = 2 \sigma_3^2  \frac{\Gamma(1+\alpha)}{(2-\alpha)(1-\alpha)}$ with $\sigma_3^2=\Var X_3(1)=b/2$.
\end{enumerate}
\end{lemma}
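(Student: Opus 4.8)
The plan is to exploit that the driving component $L_3$ is Brownian motion, so that $X_3$ is a zero-mean stationary Gaussian process and $X_3^*$, being a pathwise integral of $X_3$, is again a zero-mean Gaussian process. For zero-mean Gaussian processes, convergence of finite-dimensional distributions is equivalent to pointwise convergence of the covariance functions. Hence it suffices to show that for every $s,t\geq 0$ the normalized covariance $a_T^{-2}\Cov\left(X_3^*(Ts), X_3^*(Tt)\right)$ converges to the covariance of the claimed limit, where $a_T=T^{1/2}$ in part (i) and $a_T=T^{1-\alpha/2}\ell(T)^{1/2}$ in part (ii). No tightness or higher-moment analysis is needed, which is what makes the Gaussian case ``simple''.

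First I would write the covariance in closed form. Since $X_3$ is a supOU process with control measure $\pi$ and $\Var X_3(1)=\sigma_3^2=b/2$, its correlation function is the Laplace transform of $\pi$ as in \eqref{corrsupOU}, so $\Cov(X_3(0),X_3(\tau))=\sigma_3^2 r(\tau)$ with $r(\tau)=\int_{\R_+}e^{-\tau\xi}\pi(d\xi)$. By Fubini's theorem,
\begin{equation*}
\Cov\left(X_3^*(Ts), X_3^*(Tt)\right)=\sigma_3^2\int_0^{Ts}\int_0^{Tt} r(|u-v|)\,du\,dv .
\end{equation*}
Everything then reduces to the large-$T$ asymptotics of this double integral, which is dictated by the behavior of $r$ at infinity.

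In the short-range case (i), Fubini gives $\int_0^\infty r(\tau)\,d\tau=\int_0^\infty\xi^{-1}\pi(d\xi)$, so the hypothesis $\int_0^\infty\xi^{-1}\pi(d\xi)<\infty$ is exactly the integrability of $r$. Concentrating the double integral in an $O(1)$-band around the diagonal then yields $a_T^{-2}\Cov(X_3^*(Ts),X_3^*(Tt))\to 2\sigma_3^2\left(\int_0^\infty\xi^{-1}\pi(d\xi)\right)(s\wedge t)$, which is the covariance of $\widetilde{\sigma}_3 B$ with $\widetilde{\sigma}_3^2=2\sigma_3^2\int_0^\infty\xi^{-1}\pi(d\xi)$. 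In the long-range case (ii), the regular variation \eqref{regvarofp} of $\pi$ at the origin gives, as recorded just after \eqref{regvarofp}, the non-integrable tail $r(\tau)\sim\Gamma(1+\alpha)\ell(\tau)\tau^{-\alpha}$ as $\tau\to\infty$ with $\alpha\in(0,1)$. Substituting $u=Tx$, $v=Ty$ and reducing the double integral to $2\int_0^T(T-\tau)\,r(\tau)\,d\tau$, I would apply Karamata's theorem to get $\int_0^T r\sim\frac{\Gamma(1+\alpha)}{1-\alpha}\ell(T)T^{1-\alpha}$ and $\int_0^T\tau r(\tau)\,d\tau\sim\frac{\Gamma(1+\alpha)}{2-\alpha}\ell(T)T^{2-\alpha}$, whence $a_T^{-2}\Cov(X_3^*(Ts),X_3^*(Tt))\to\widetilde{\sigma}_{3,\alpha}^2\cdot\tfrac{1}{2}\left(s^{2H}+t^{2H}-|s-t|^{2H}\right)$ with $H=1-\alpha/2$; this is precisely the covariance of $\widetilde{\sigma}_{3,\alpha} B_H$.

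The main obstacle will be the long-range case: one must justify interchanging the limit and the integral for the doubly-integrated regularly varying function $r$, controlling the slowly varying factor $\ell$ uniformly via the uniform convergence theorem and Potter's bounds, and separately handling the diagonal region where $|u-v|$ is small and $r$ is merely bounded rather than regularly varying. Under the normalization $T^{2-\alpha}\ell(T)$ the bounded near-diagonal contribution is negligible, so the asymptotics are governed entirely by the regularly varying tail, and the constant $\widetilde{\sigma}_{3,\alpha}^2=2\sigma_3^2\frac{\Gamma(1+\alpha)}{(2-\alpha)(1-\alpha)}$ emerges from the two Karamata constants above. Since $X_3$ is a finite-variance supOU process with Gaussian marginal, the entire statement is in fact the specialization of \cite[Theorems 2.1 and 2.4]{GLT2017Limit} to this Gaussian component, and the argument above is the Gaussian instance of those proofs.
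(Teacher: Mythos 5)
Your proposal is correct and follows essentially the same route as the paper, which disposes of this lemma by citing \cite[Theorems 2.1 and 2.4]{GLT2017Limit}: since $X_3^*$ is a centered Gaussian process, fdd convergence reduces to covariance convergence, and the covariance asymptotics follow from $r(\tau)=\int_{\R_+}e^{-\tau\xi}\pi(d\xi)$ together with integrability of $r$ in case (i) and Karamata's theorem applied to $r(\tau)\sim\Gamma(1+\alpha)\ell(\tau)\tau^{-\alpha}$ in case (ii), exactly as you outline. Your Karamata computation yields the limit variance $2\sigma_3^2\,\Gamma(1+\alpha)/((2-\alpha)(1-\alpha))$, which agrees with the lemma provided the displayed formula for $\widetilde{\sigma}_{3,\alpha}$ is read as defining $\widetilde{\sigma}_{3,\alpha}^2$ (an apparent typographical slip in the statement, consistent with the squared constant $\widetilde{\sigma}_3^2$ in part (i)).
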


\subsection{Proofs of Theorems \ref{thm:mainb=0} and \ref{thm:mainb!=0}}
The limiting behavior of the integrated process $X^*$ follows by combining the limit theorems of the three components in the decomposition \eqref{e:decomposition}. If $X^*$ consists of at least two non-zero components, then each of these may be suitably normalized to obtain a non-trivial limiting process. However, to obtain the limit of the sum of the three components, namely the joint process $X^*$, one has to take the fastest growing among the three normalizations suitable for the components. Hence, the limiting process will depend on the orders of normalizing sequences of the component processes. Namely, an interplay between the parameters $\alpha$, $\beta$ and $\gamma$ will determine the limit.

\begin{proof}[Proof of Theorem \ref{thm:mainb=0}]
The proof is based on comparing the orders of normalizing sequences. Let $E_1$ and $E_2$ denote the exponents of the normalizing sequences for the processes $X_1^*(Tt)$ and $X_2^*(Tt)$ respectively.
\begin{enumerate}[(I)]
\item If $\gamma<1+\alpha$, then $E_1=1/\gamma$ by Lemma \ref{lemma:X1case1}. It is enough to show that $T^{-1/\gamma}X_2^*(Tt)\overset{P}{\to}0$ by showing that $1/\gamma>E_2$. 
\begin{itemize}
\item If $\alpha>1$, then $E_2=1/2$ by Lemma \ref{lemma:X2SRD}. Since $\gamma<2$, $1/\gamma>1/2$.
\item If $\alpha<1$ and $\beta<1+\alpha$, then $E_2=1/(1+\alpha)$ by Lemma \ref{lemma:X2LRD}(i). Since $\gamma<1+\alpha$, we have $1/\gamma>1/(1+\alpha)$.
\item If $\alpha<1$ and $1+\alpha<\beta$, then $E_2=1-\alpha/\beta$ by Lemma \ref{lemma:X2LRD}(ii). We have $1-\alpha/\beta<1+(1-\gamma)/\beta<1+(1-\gamma)/\gamma=1/\gamma$.
\end{itemize}
\item If $1+\alpha<\gamma$, then $E_1=1/(1+\alpha)$ by Lemma \ref{lemma:X1case2}. Note that implicitly we must have $\alpha<1$.
\begin{enumerate}[({II}.a)]
\item If $\beta<1+\alpha$, then $E_2=1/(1+\alpha)$ by Lemma \ref{lemma:X2LRD}(i). We have $E_1=E_2$ and the same normalization, hence the limit is a sum of independent limits obtained in Lemma \ref{lemma:X1case2} and Lemma \ref{lemma:X2LRD}(i). We additionally use \cite[Property 1.2.1]{samorodnitsky1994stable}.
\item If $1+\alpha<\beta$, then $E_2=1-\alpha/\beta$ by Lemma \ref{lemma:X2LRD}(ii). We have $1-\alpha/\beta>1-\alpha/(1+\alpha)=1/(1+\alpha)<$ since $1+\alpha<\beta$.
\end{enumerate}
\end{enumerate}
\end{proof}

\begin{proof}[Proof of Theorem \ref{thm:mainb!=0}]
The proof follows the same arguments as the proof of Theorem \ref{thm:mainb=0}.\\ (I) follows easily from Theorem \ref{thm:mainb=0} and Lemma \ref{lemma:gaussiancase}. For $\alpha>1$ we conclude the statement from the fact that $1/\gamma>1/2$. If $\alpha<1$ and $\gamma<2/(2-\alpha)$, then $\gamma<1+\alpha$, hence we need to compare $1/\gamma$ and $1-\alpha/2$. But this follows easily since $1/\gamma >1-\alpha/2 \Leftrightarrow \gamma < 2/(2-\alpha)$. (II) follows similarly. Indeed, if $2/(2-\alpha) < \gamma < 1+\alpha$, then $1/\gamma <1-\alpha/2$. If $\gamma >1+\alpha$, the rate of growth of the normalizing sequence depends on $\beta$. If $\beta<1+\alpha$, the order of normalizing sequence for $X_1^*(Tt)+X_2^*(Tt)$ is $1/(1+\alpha)$ and $1/(1+\alpha)=1-\alpha/(1+\alpha)<1-\alpha/2$. If $1+\alpha<\beta$, the order of the normalizing sequence for $X_1^*(Tt)+X_2^*(Tt)$ is $1-\alpha/\beta<1-\alpha/2$.
\end{proof}

\textbf{Acknowledgment:} This work was supported by a grant from the Simons Foundation/569118MT at Boston University.

\bigskip
\bigskip

\bibliographystyle{agsm}
\bibliography{References}

\end{document}